\crefname{hypothesis}{Hypothesis}{Hypotheses}
\crefname{fact}{Fact}{Facts}
\title{Error Estimates for the Arnoldi Approximation of a Matrix Square Root\thanks{Submitted to the editors DATE.
\funding{This work was funded by NSF DMS-2208267 and ARO W911NF2310256.}}}
\author{James H. Adler \thanks{Department of Mathematics, Tufts University, Medford, MA 02155, USA
   (\email{James.Adler@tufts.edu}, \email{Xiaozhe.Hu@tufts.edu},
   \email{Zhongqin.Xue@tufts.edu}).}
\and Xiaozhe Hu\footnotemark[2]
\and Wenxiao Pan\thanks{Department of Mechanical Engineering, University of Wisconsin-Madison, Madison, WI 53706, USA 
    (\email{wpan9@wisc.edu}).}
  \and Zhongqin Xue\footnotemark[2]
}
\pretocmd{\maketitle}{\nolinenumbers}{}{}
\newcommand{\vertiii}[1]{{\left\vert\kern-0.25ex\left\vert\kern-0.25ex\left\vert #1 
    \right\vert\kern-0.25ex\right\vert\kern-0.25ex\right\vert}}
\begin{document}

\maketitle

\begin{abstract}
The Arnoldi process provides an efficient framework for approximating functions of a matrix applied to a vector, i.e., of the form $f(M)\bm{b}$, by repeated matrix-vector multiplications. In this paper, we derive error estimates for approximating the action of a matrix square root using the Arnoldi process, where the integral representation of the error is reformulated in terms of the error for solving the linear system  $M\bm{x}=\bm{b}$. The results extend the error analysis of the Lanczos method for Hermitian matrices in [Chen et al., SIAM J. Matrix Anal. Appl., 2022] to non-Hermitian cases and provide an improved bound for the Hermitian case.  Furthermore, in practical settings, the matrix may only be available via approximate or structured representations. Motivated by this, we extend the analysis and establish a generalized error bound for perturbed matrices.
The numerical results on matrices with different structures demonstrate that our theoretical analysis yields a reliable upper bound. 
Finally, simulations on large-scale matrices arising in particulate suspensions, represented in hierarchical matrix form, validate the effectiveness and practicality of the approach.
\end{abstract}


\begin{keywords}
Arnoldi approximation, matrix square root, matrix functions
\end{keywords}

\begin{MSCcodes}
65F60 65Z05 70-10 

\end{MSCcodes}

\section{Introduction}
Given a matrix $M\in \mathbb{C}^{n\times n}$, its square roots are the solutions of 
$X^2=M.$
If $M$ is positive definite, then $M$ admits a unique positive definite square root \cite{higham2008functions}, denoted as $M^{1/2}$. 
Matrix square roots frequently arise in 
stochastic simulation \cite{ColloidsReview_COCIS2019}, state estimation \cite{asl2019adaptive}, image reconstruction \cite{bardsley2012mcmc}, 
and Bayesian optimization \cite{pleiss2020fast,hernandez2014predictive}. 
Hence, the computation of a matrix square root is a fundamental problem in numerical linear algebra. 

There are a list of approaches in the literature for computing the square root of a matrix. Cholesky decomposition and eigenvalue decomposition provide useful tools for symmetric positive definite (SPD) matrices \cite{watkins2004fundamentals}. For general matrices, the Schur decomposition transforms the matrix into a triangular form, where the square root can be computed explicitly \cite{higham2008functions}. 
Additionally, Newton-like iterations \cite{higham2008functions} provide stable and efficient implementation for approximating the matrix square root. However, these classical methods all suffer from at least $\mathcal{O}(n^3)$ complexity for large-scale matrices \cite{higham2008functions}. 
Even when $M$ is sparse or structured, its square root is typically dense, making direct
computation and storage costly or even infeasible for large-scale problems. 
Consequently, rather than computing the square root of a matrix explicitly, 
one can instead compute
the action of a matrix square root on a vector. This is also the case for general matrix functions. Krylov subspace methods such as the Arnoldi iteration, therefore provide a powerful framework, as they only rely on repeated matrix-vector multiplications to construct the Krylov subspace. For general dense matrices, the overall computational cost of the Arnoldi process is about $\mathcal{O}(kn^2)$, where $k$ is the number of Arnoldi iterations. 
When the matrix is Hermitian, the Arnoldi process simplifies to the Lanczos algorithm \cite{chen2024lanczos}, characterized by a three-term recurrence. 


There has been growing interest  in the study and discussion of Krylov methods for approximating the action of matrix functions. An early study by Saad \cite{saad1992analysis} related the error of Krylov subspace approximations for $f(M)\mathbf{b}$ to polynomial approximation.
If $M$ is Hermitian, then one obtains the near-optimality property of Lanczos, that is, the error after $k$ iterations is bounded by that of the best polynomial of degree less than $k$ uniformly approximating $f(x)$ over the spectrum of $M$; see, e.g. \cite{musco2018stability,chen2022error}.
In \cite{diele2009error}, the error of polynomial Krylov approximations for matrix functions is represented in integral forms, and \textit{a posteriori} error estimates for exponential-like and trigonometric matrix functions are provided. 
In \cite{eiermann2006restarted,ilic2010restarted}, the authors expressed the errors of Krylov subspace approximations in terms of divided differences, providing insights into the design of restart schemes.
For general analytic functions $f$ admitting an integral representation, \cite{frommer2014efficient} provided an integral representation for the error of the Arnoldi approximation to $f(M)\bm{b}$, and proposed a restarting strategy based on evaluating this error function via numerical quadrature. 
Based on \cite{frommer2014efficient,frommer2014convergence}, Frommer and Schweitzer \cite{frommer2016error} derived error bounds and estimates of the Krylov subspace
approximations $f(M)\bm{b}$ for Stieltjes functions.
Using the Cauchy integral formula, Chen et al. \cite{chen2022error} established \textit{a priori} and \textit{a posteriori} error bounds for the Lanczos method by reformulating the approximation error in terms of the error of solving a shifted linear system. In addition, Amsel et al. \cite{amsel2023near} explored the near optimality of the Lanczos methods for rational functions with real poles lying outside the spectral interval of $M$. Recently, \cite{schweitzer2025near} proved that, for Stieltjes and related functions, the Lanczos method exhibits, what they call, near-instance optimality. There are also several works that studied Krylov approximations in the context of rational approximations \cite{frommer2009error,frommer2008stopping,frommer20132} and matrix exponentials \cite{beckermann2009error,stewart1996error,hochbruck1997krylov,druskin1998using}. Most of these results have been established under the assumption that the matrices are Hermitian, while in many practical applications, the matrices of interest are non-Hermitian, which motivates us to extend the results to the non-Hermitian setting.



In this work, we present error bounds of the Arnoldi process specifically for evaluating a matrix square root, extending the results in \cite{chen2022error} to the case of positive-definite non-Hermitian matrices. In this context, we say that $M$ is positive-definite if $\operatorname{Re}(\bm{x}^*M\bm{x})>0$ for any unit vector $\bm{x}\in\mathbb{C}^n$. The main idea is to utilize the integral representation of the error in the iterations of the Arnoldi process, and then relate it to the error for solving the linear system $M\bm{x}=\bm{b}$. 
Compared to Hermitian matrices, non-Hermitian matrices exhibit complex spectral behavior: they are not necessarily diagonalizable, and even when they are, the eigenvectors are typically not orthogonal and the eigenvalues may be complex. 
To address these difficulties, 
we relate the modulus of a function in the complex plane to the real-valued function of the modulus, reducing the analysis to the positive real line.   This also allows us to derive a sharper bound for the Hermitian case by using the average of eigenvalues, which improves on the results in {\cite{chen2022error,amsel2023near}.   Additionally, for large-scale problems, we derive an error bound for approximating the matrix square root based on a data-sparse representation of the matrix.  
In numerical experiments, we investigate  matrices with different structures to verify the theoretical results.
Finally, we validate the effectiveness of our approach on large-scale mobility matrices arising from particulate suspension simulations. 
To improve computational efficiency and reduce storage cost, the matrices are preprocessed through adaptive cross approximation and stored in the form of hierarchical matrices.

The remainder of this paper is organized as follows. 
In Section \ref{Arnoldi}, 
we introduce the Arnoldi process for matrix functions. The error estimation of 
the Arnoldi process for computing a square root is provided in Section \ref{main}.  In Section \ref{numerical},  numerical examples are presented to
validate the theoretical results discussed in this paper. Finally, concluding remarks are provided in Section \ref{sum}.

\section{Matrix functions and Arnoldi iterations}\label{Arnoldi} 
There are different ways to define a function of a matrix, depending on the properties of the matrix and the type of function. In this work, we focus on two of them: the definition via the Jordan canonical form and the Cauchy integral representation.  

Given a square matrix $M \in \mathbb{C}^{n \times n}$, assume its Jordan canonical form is given by 
$$
M = Z J Z^{-1},
$$
where $Z$ is a nonsingular matrix, and  $J=\operatorname{diag}\left(J_1, J_2, \ldots,\right.\left.J_p\right)$ with   $J_k\in\mathbb{R}^{m_k\times m_k}$, $\sum_{i=1}^p m_k = n$, being a Jordan block associated with the eigenvalue $\lambda_k$, $1\leq k \leq p$. Then, the matrix function $f(M)$ is defined as
$$
f(M):=Z f(J) Z^{-1}=Z \operatorname{diag}\left(f\left(J_1\right),f\left(J_2\right),\cdots ,f\left(J_p\right)\right) Z^{-1},
$$  
where $f(J_k)$ takes the form:
\begin{align}\label{de_matrixfun}
f\left(J_k\right):=\left[\begin{array}{cccc}
f\left(\lambda_k\right) & f^{\prime}\left(\lambda_k\right) & \ldots & \frac{f^{\left(m_k-1\right)}\left(\lambda_k\right)}{\left(m_k-1\right)!} \\
& f\left(\lambda_k\right) & \ddots & \vdots \\
& & \ddots & f^{\prime}\left(\lambda_k\right) \\
& & & f\left(\lambda_k\right)
\end{array}\right].
\end{align}

Alternatively, when $f$ is analytic on a domain containing the spectrum of $M$, one can also define $f(M)$ using the Cauchy integral formula:
$$
f(M) = -\frac{1}{2\pi i} \int_{\mathcal{C}} f(z) (M- zI)^{-1} \, \mathrm{d}z,
$$
where $\mathcal{C}$ is a closed contour enclosing the spectrum of $M$. The two definitions are equivalent when $f$ is analytic \cite{higham2008functions}.

In practice, we are primarily interested in the action of a matrix function $f(M)$ on a vector $\bm{b}$. To achieve this efficiently, we employ the Arnoldi process, which is described in \cref{Arn_algor}. 
Given $\bm{b} \in \mathbb{C}^n$, this yields
\begin{align}\label{Arnoldipro}
    M Q_k = Q_k H_k + h_{k+1,k}\bm{q}_{k+1}\bm{e}_k^T,
\end{align}
where $H_k=[h_{i,j}] \in \mathbb{C}^{k \times k}$ is an upper Hessenberg matrix satisfying $Q_k^* M Q_k = H_k$, $\bm{e}_k$ is the $k$-th  canonical basis vector, and $Q_k$ is an orthonormal matrix whose columns, $\bm{q}_{i}$, form a  basis for the $k$-th Krylov space,
$$\mathcal{K}_k(M, \bm{b}) = \text{span}\{\bm{b}, M\bm{b}, M^2\bm{b}, \dots, M^{k-1}\bm{b}\}.$$
Then, the $k$-th Arnoldi iteration for approximating $f(M)\bm{b}$ is given by
\begin{align*}
f(M)\bm{b}\approx\operatorname{Arn}_k(f ; M, \bm{b}):=Q_k f(H_k) Q_k^*\bm{b}=\|\bm{b}\|Q_k f(H_k) \bm{e}_1.
\end{align*}

\begin{algorithm}[h!]
\caption{Arnoldi Process}
\begin{algorithmic}[1]\label{Arn_algor}
\STATE \textbf{Input:} Matrix $M \in \mathbb{C}^{n \times n}$, vector $\bm{b}\in \mathbb{C}^{n}$, and iteration number $k$
\STATE \textbf{Initialize:} Set $\bm{q}_1 = \bm{b}/\|\bm{b}\|$ and $Q_k=[\bm{q}_1]$
\FOR{$j = 1$ to $k$}
\STATE $\bm{w} = M \bm{q}_j$
\FOR{$i = 1, \dots, j$} 
\STATE $h_{i,j} = \bm{q}_i^* \bm{w}$
\STATE $\bm{w} = \bm{w} - h_{i,j} \bm{q}_i$
\ENDFOR
\STATE $h_{j+1,j} = \|\bm{w}\|$
\IF{$h_{j+1,j} = 0$}
\STATE Stop
\ENDIF
\STATE $\bm{q}_{j+1} = \bm{w} / h_{j+1,j}$
\STATE Append $\bm{q}_{j+1}$ to $Q_k = [\bm{q}_1, \dots, \bm{q}_{j+1}]$
\ENDFOR
\end{algorithmic}
\end{algorithm}

Note that when $f(x) = 1/x$, the Arnoldi approximation reduces to the Full Orthogonalization Method (FOM) \cite{saad2003iterative} for solving the linear system $M\bm{x} = \bm{b}$, with initial guess $\bm{x}_0 = \bm{0}$. For the rest of this paper, whenever FOM is referred, we always assume the initial guess is zero.

\section{Error bound}\label{main} 
In this section, we study error bounds of the Arnoldi process for approximating general matrix functions and then focus on the analysis of the matrix square root.
\subsection{Integral representation of the error}
Let $f$ be analytic along and inside a contour $\mathcal{C}$ that encloses the spectrum of both $M$ and $H_k$. By the Cauchy integral representation of $f(H_k)$, the $k$-th Arnoldi approximation to $f(M)\bm{b}$ is given by:
\begin{align*}
\operatorname{Arn}_k(f ; M, \bm{b})=\|\bm{b}\|Q_k f(H_k) \bm{e}_1=- \frac{\|\bm{b}\|}{2\pi i} \int_\mathcal{C} f(z) Q_k (H_k-z I)^{-1} \bm{e}_1\, \mathrm{d}z.
\end{align*}
The integral involves solving a  shifted linear system $(M - zI)\bm{x}=\bm{b}$ using FOM, yielding the approximation $\|\bm{b}\| Q_k (H_k - zI)^{-1} \bm{e}_1$. 
The corresponding error and residual are defined as:
$$
\bm{\xi}_z^k := (M - zI)^{-1}\bm{b} - Q_k (H_k - zI)^{-1} \|\bm{b}\|\bm{e}_1\quad\text{and}\quad \bm{r}_z^k :=(M - zI)\bm{\xi}_z^k.
$$
Then, we represent the error of the Arnoldi process for 
$f(M)\bm{b}$ by:
\begin{align}\label{err}
   f(M)\bm{b}-\operatorname{Arn}_k(f ; M, \bm{b}) = f(M)\bm{b} - \|\bm{b}\|Q_k f(H_k) \bm{e}_1 = -\frac{1}{2\pi i} \int_\mathcal{C} f(z) \bm{\xi}_z^k \, \mathrm{d}z.
\end{align}

Following a similar argument to that  in \cite{chen2022error}, we have the following results. 
\begin{lemma}\label{est:res}
For $z\in \mathbb{C}$ lying on a closed contour $\mathcal{C}$ that encloses the spectrum of $H_k$, 
$$
\bm{r}_z^k = \left( \frac{(-1)^k}{\det(H_k - zI)} \prod_{j=1}^k h_{j+1,j} \right) \|\bm{b}\| \, \bm{q}_{k+1}.
$$
\end{lemma}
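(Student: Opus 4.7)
The plan is to derive the residual directly from the Arnoldi relation \eqref{Arnoldipro} and then evaluate a particular entry of the inverse of a small Hessenberg matrix using the cofactor formula.

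First I would subtract $zQ_k$ from both sides of \eqref{Arnoldipro} to obtain the shifted Arnoldi identity
\begin{equation*}
(M-zI)Q_k = Q_k(H_k - zI) + h_{k+1,k}\,\bm{q}_{k+1}\bm{e}_k^T.
\end{equation*}
Since $\bm{q}_1 = \bm{b}/\|\bm{b}\|$ gives $\bm{b} = \|\bm{b}\|\,Q_k\bm{e}_1$, applying both sides to $(H_k-zI)^{-1}\|\bm{b}\|\bm{e}_1$ and rearranging yields
\begin{equation*}
\bm{r}_z^k
= \bm{b} - (M-zI)Q_k(H_k-zI)^{-1}\|\bm{b}\|\bm{e}_1
= -h_{k+1,k}\|\bm{b}\|\,\bigl(\bm{e}_k^T(H_k-zI)^{-1}\bm{e}_1\bigr)\,\bm{q}_{k+1}.
\end{equation*}
This reduces the claim to showing that the $(k,1)$ entry of $(H_k - zI)^{-1}$ equals $(-1)^{k+1}\prod_{j=1}^{k-1} h_{j+1,j}/\det(H_k - zI)$.

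For that step I would use Cramer's rule: the $(k,1)$ entry of $(H_k - zI)^{-1}$ is $(-1)^{k+1}\det(B)/\det(H_k-zI)$, where $B$ is the $(k-1)\times(k-1)$ matrix obtained by deleting the first row and the last column of $H_k - zI$. Because $H_k$ is upper Hessenberg, removing row $1$ shifts the subdiagonal entries $h_{j+1,j}$ onto the diagonal of $B$, while removing the last column kills the only part that sat above the (new) diagonal in the bottom-right; the structure that remains is upper triangular with diagonal entries $h_{2,1}, h_{3,2}, \ldots, h_{k,k-1}$. Hence $\det(B) = \prod_{j=1}^{k-1} h_{j+1,j}$. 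Multiplying by the prefactor $-h_{k+1,k}\|\bm{b}\|$ and tracking the sign gives exactly
\begin{equation*}
\bm{r}_z^k = \left(\frac{(-1)^k}{\det(H_k-zI)}\prod_{j=1}^k h_{j+1,j}\right)\|\bm{b}\|\,\bm{q}_{k+1},
\end{equation*}
as claimed.

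The only delicate point is verifying the triangular structure of $B$ and the sign bookkeeping; everything else is a direct application of the Arnoldi relation. I would likely include a short remark explaining why $B$ is triangular, since this is the only non-mechanical step in the argument.
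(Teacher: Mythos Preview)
Your proof is correct and follows essentially the same route as the paper: both derive the shifted Arnoldi identity, reduce the residual to $-h_{k+1,k}\|\bm{b}\|\bigl(\bm{e}_k^T(H_k-zI)^{-1}\bm{e}_1\bigr)\bm{q}_{k+1}$, and then evaluate that $(k,1)$ entry via the cofactor/adjugate formula and the upper Hessenberg structure. Your explanation of why the minor $B$ is upper triangular is slightly muddled in wording (deleting row~1 already makes the submatrix upper triangular; deleting column~$k$ just makes it square), but the conclusion and the resulting determinant $\prod_{j=1}^{k-1}h_{j+1,j}$ are correct.
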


\begin{proof}
According to \cref{Arn_algor}, we have
$$(M - zI) Q_k = Q_k (H_k - zI) + h_{k+1,k} \bm{q}_{k+1} \bm{e}_k^T,$$
and then,
\begin{align*}
\|\bm{b}\|(M - zI) Q_k (H_k - zI)^{-1} \bm{e}_1 
&= Q_k \|\bm{b}\|\bm{e}_1 + h_{k+1,k} \bm{q}_{k+1} \bm{e}_k^T (H_k - zI)^{-1} \|\bm{b}\|\bm{e}_1\\
&= \bm{b} + \|\bm{b}\|h_{k+1,k} \bm{q}_{k+1} \bm{e}_k^T (H_k - zI)^{-1}  \bm{e}_1.    
\end{align*}
Noting that $(H_k - zI)^{-1} = (1 / \det(H_k - zI)) \text{adj}(H_k - zI)$ and using the properties of an adjugate matrix leads to
$$
\bm{e}_k^T (H_k - zI)^{-1} \bm{e}_1 = \frac{\text{cof}(H_k - zI)_{1k}}{\det(H_k - zI)}= \frac{(-1)^{k+1}}{\det(H_k - zI)} \prod_{j=1}^{k-1} h_{j+1,j},
$$
where $\text{cof}(H_k - zI)_{1k}$ denotes the $(1,k)$-cofactor of $H_k - zI$. Substituting this into the expression for $\bm{r}_z^k$ gives the desired result.
\end{proof}
Applying \cref{est:res}, we establish the following result, which connects the errors and residuals of the Arnoldi approximations for different shifts.
\begin{lemma}\label{rela_err}
Assuming $M$ and $M - zI$ are invertible for some $z \in \mathbb{C}$, we obtain
$$
\bm{\xi}_z^k = \frac{\det(H_k)}{\det(H_k-zI)}(M-zI)^{-1}M \bm{\xi}_0^k\quad \text{and}\quad
\bm{r}_z^k = \frac{\det(H_k)}{\det(H_k-zI)}\bm{r}_0^k.
$$
\end{lemma}

\begin{proof}
The proof follows a similar argument to that of Corollary 2.4 in \cite{chen2022error}.
\end{proof}

Then, we have the following Theorem.
\begin{theorem}\label{error_ref}
Given a nonsingular matrix $M \in \mathbb{C}^{n \times n}$ and a vector $\bm{b} \in \mathbb{C}^n$, let function $f$ be analytic on and inside a closed contour $\mathcal{C}$ enclosing the spectrum of $M$ and $H_k$, where $H_k$ is an upper Hessenberg matrix generated from the Arnoldi process for $M$ and $\bm{b}$. Then, the error of the Arnoldi iteration for $f (M)\bm{b}$ can be expressed as
\begin{align}\label{th:err}
    f(M) \bm{b}-\operatorname{Arn}_k(f ; M, \bm{b})=\left(-\frac{1}{2 \pi i} \int_{\mathcal{C}} f(z) \frac{\det(H_k)}{\det(H_k-zI)}(M-zI)^{-1}M \, \mathrm{d}z\right) \bm{\xi}_0^k.
\end{align}
\end{theorem}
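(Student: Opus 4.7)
The plan is a direct substitution argument built on Lemma~\ref{rela_err}. The integral representation \eqref{err} already expresses the Arnoldi error as a contour integral of $f(z)\,\bm{\xi}_z^k$ around $\mathcal{C}$, and Lemma~\ref{rela_err} factors $\bm{\xi}_z^k$ as a scalar times a $z$-dependent matrix acting on the fixed vector $\bm{\xi}_0^k$. Pulling that vector out of the integral then delivers \eqref{th:err}.

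First I would verify that the hypotheses of Lemma~\ref{rela_err} are satisfied pointwise for every $z\in\mathcal{C}$. The theorem assumes $M$ is nonsingular and that $\mathcal{C}$ encloses (so does not meet) both $\sigma(M)$ and $\sigma(H_k)$; consequently $M-zI$ and $H_k-zI$ are invertible uniformly along the contour, which is exactly what the lemma needs in order to assert
$$\bm{\xi}_z^k \;=\; \det\!\bigl(h_{z}(H_k)\bigr)\, h_{z}(M)\, \bm{\xi}_0^k \qquad \text{for every } z\in\mathcal{C}.$$
The vector $\bm{\xi}_0^k$ on the right is well-defined because $M$ is nonsingular and $H_k$ is invertible (its eigenvalues lie inside $\mathcal{C}$, and the hypothesis already assumed the spectra were enclosed).

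Next I would substitute the above identity into \eqref{err} and observe that $\bm{\xi}_0^k$ depends only on $M$, $\bm{b}$, and $k$, not on $z$, so it factors out of the contour integral. What remains inside the integral is exactly the matrix-valued integrand appearing in \eqref{th:err}. Before passing the vector outside, I would note for completeness that the integrand $z\mapsto f(z)\,\det(h_z(H_k))\,h_z(M)$ is analytic in a neighborhood of $\mathcal{C}$: $f$ is analytic there by assumption, $\det(h_z(H_k))$ is a rational scalar function with poles only in $\sigma(H_k)$, and the resolvent $(M-zI)^{-1}$ is analytic off $\sigma(M)$, so the integral is well-defined and the factorization commutes with integration.

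I do not expect any genuine obstacle in this argument: all the substantive work was already packaged into Lemmas~\ref{est:res} and~\ref{rela_err}, and the present theorem is essentially a clean restatement of their consequence once the Cauchy representation \eqref{err} is in hand. The only step requiring a moment's care is the bookkeeping of invertibility hypotheses along $\mathcal{C}$, which the enclosure assumption handles uniformly.
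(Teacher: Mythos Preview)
Your proposal is correct and follows exactly the same route as the paper's own proof: substitute the identity from Lemma~\ref{rela_err} into the integral representation \eqref{err} and factor out the $z$-independent vector $\bm{\xi}_0^k$. The paper's argument is in fact a two-line version of what you wrote, omitting the hypothesis-checking and analyticity remarks you added for completeness.
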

\begin{proof}
    The result is immediate from \cref{err} and \cref{rela_err}:
    \begin{align*}
        f(M) \bm{b}-\operatorname{Arn}_k(f ; M, \bm{b}) & = -\frac{1}{2\pi i} \int_\mathcal{C} f(z) \bm{\xi}_z^k \, \mathrm{d}z\\
        & = -\frac{1}{2\pi i} \int_\mathcal{C} f(z) \frac{\det(H_k)}{\det(H_k-zI)}(M-zI)^{-1}M \bm{\xi}_0^k\, \mathrm{d}z.
    \end{align*}
\end{proof}

\begin{remark}
   Here we assume that the function $f$ is analytic on and inside a closed contour $\mathcal{C}$ enclosing the spectrum of $M$ and $H_k$. However, the theorem holds under the sufficient condition that $\mathcal{C}$ encloses the field of values of $M$. 
   
   Similarly to the results in \cite{frommer2009error,frommer2014efficient},  \cref{error_ref} shows that for any nonsingular $M$, the error of the Arnoldi iteration for computing $f(M)\bm{b}$ is reformulated as the error in solving the linear system $M\bm{x} = \bm{b}$. This further simplifies the contour integral. 
   For a more general setting, the error admits a bound related to the error of solving shifted linear systems of the form $(M - wI)\bm{x} = \bm{b}$, where $w$ lies outside the spectrum of both $H_k$ and $M$.

\end{remark}
\subsection{Matrix square root}\label{32}
We now turn to deriving error estimates of the Arnoldi process for approximating a matrix square root, that is $f(x) = x^{1/2}$. To ensure the existence of a unique positive definite square root, we assume that $M$ is positive-definite. 



\begin{theorem}\label{error_square}
Given a positive-definite matrix $M \in \mathbb{C}^{n \times n}$ and a vector $\bm{b} \in \mathbb{C}^n$, the error of the Arnoldi process for approximating the matrix square root is bounded by
\begin{align*}
\|M^{1 / 2} \bm{b}-\operatorname{Arn}_k(x^{1 / 2} ; M, \bm{b})\|\leq\frac{1}{\pi} \left(\int_0^{\infty}x^{1 / 2} \prod_{i=1}^k\left|\frac{\lambda_i(H_k)}{\lambda_i(H_k)+x}\right| \, \mathrm{d}x\right)\left\|\bm{\xi}_0^k\right\|,
\end{align*}
where  $\lambda_1(H_k)\geq\lambda_2(H_k)\geq\cdots\geq\lambda_k(H_k)$ are the eigenvalues of $H_k$.
\end{theorem}
\begin{proof}
Define $\mathcal{C}$ as a keyhole contour, consisting of a large circular arc $\mathcal{C}_R$ of radius $R$ centered at the origin, a small  circular arc $\mathcal{C}_\epsilon$ of radius $\epsilon$ around the origin, and two horizontal line segments $\mathcal{C}^+_{\delta}$ and $\mathcal{C}^-_{\delta}$, which run at a small vertical distance $0<\delta<\epsilon$ above and below the branch cut, respectively (see \cref{fig:keyhole_contour} for details). 

\begin{figure}[ht] 
    \centering 
\begin{tikzpicture}[>=stealth,scale=0.8]
\def\R{3} 
\def\e{1} 
\def\gap{12} 

\draw[thick,blue,dashed,postaction={decorate},decoration={markings,mark=at position 0.25 with {\arrow[thick,blue]{<}}}] 
  ([shift={(-6,0)}] 0.5*\gap:\R) arc[start angle=180-0.5*\gap,end angle=-180+0.5*\gap,radius=\R] node[below left,xshift=50mm,yshift=-10mm] {$\mathcal{C}_R$};

\draw[thick,blue,dashed,postaction={decorate},decoration={markings,mark=at position 0.75 with {\arrow[thick,blue]{>}}}] 
  ([shift={(-2,0)}] \gap:\e) arc[start angle=180-\gap,end angle=-180+\gap,radius=\e] node[below left,xshift=18mm,yshift=-4mm] {$\mathcal{C}_\epsilon$};

\draw[thick,-,black] (0,0) -- (50:\R) 
    node[midway,above right,xshift=1mm,yshift=-1mm] {$R$};

\draw[thick,-,black] (0,0) -- (20:\e) 
    node[midway,above right,xshift=-1mm,yshift=-0.5mm] {$\epsilon$};

\draw[thick,purple,<-] (-\e,0.25) -- (-\R,0.25) node[midway,above,xshift=-1mm] {$\mathcal{C}_\delta^+$};

\draw[thick,purple,->] (-\e,-0.25) -- (-\R,-0.25) node[midway,below,xshift=-1mm] {$\mathcal{C}_\delta^-$};

\draw[<->,thick] (-2.2,0.25) -- (-2.2,-0.25);
\node[right] at (-2.2,0) {2$\delta$};

\node[black,below right] at (-0,0) {O};

\draw[->] (-\R-1,0) -- (\R+0.5,0) node[right] {$x$};
\draw[->] (0,-\R-0.5) -- (0,\R+0.5) node[above] {$y$};


\end{tikzpicture}
\caption{Keyhole contour with outer circle $\mathcal{C}_R$, inner circle $\mathcal{C}_\epsilon$, and offset $\delta$ above/below the branch cut.} 
\label{fig:keyhole_contour} 
\end{figure}

Using properties of the determinant, we have
$$
\left|\frac{\det(H_k)}{\det(H_k-zI)}\right|
=\left|\prod_{i=1}^k \frac{\lambda_i\left(H_k\right)}{\lambda_i(H_k-zI)}\right|=\prod_{i=1}^k \left|\frac{\lambda_i\left(H_k\right)}{\lambda_i(H_k)-z}\right|.
$$
Taking the 2-norm on both sides of \eqref{th:err} with $f(z)=z^{1/2}$, we obtain
\begin{align*}
&\left\|M^{1 / 2} \bm{b}-\operatorname{Arn}_k(f ; M, \bm{b})\right\| \\
\leq
&\left(\frac{1}{2 \pi} \int_{\mathcal{C}_R}|z^{1 / 2}| \prod_{i=1}^k \left|\frac{\lambda_i\left(H_k\right)}{\lambda_i(H_k)-z}\right|\left\|(M-zI)^{-1}M\right\| \, |\mathrm{d}z|\right)\left\|\bm{\xi}_0^k\right\|\\
&-\left(\frac{1}{2 \pi} \int_{\mathcal{C}_\epsilon}|z^{1 / 2}| \prod_{i=1}^k \left|\frac{\lambda_i\left(H_k\right)}{\lambda_i(H_k)-z}\right|\left\|(M-zI)^{-1}M\right\| \, |\mathrm{d}z|\right)\left\|\bm{\xi}_0^k\right\|\\
&+\left(\frac{1}{2 \pi} \int_{\mathcal{C}_\delta^+}|z^{1 / 2}|\prod_{i=1}^k \left|\frac{\lambda_i\left(H_k\right)}{\lambda_i(H_k)-z}\right|\left\|(M-zI)^{-1}M\right\| \, |\mathrm{d}z|\right)\left\|\bm{\xi}_0^k\right\|\\
&-\left(\frac{1}{2 \pi} \int_{\mathcal{C}_\delta^-}|z^{1 / 2}|\prod_{i=1}^k \left|\frac{\lambda_i\left(H_k\right)}{\lambda_i(H_k)-z}\right|\left\|(M-zI)^{-1}M\right\| \, |\mathrm{d}z|\right)\left\|\bm{\xi}_0^k\right\|.
\end{align*}
As the outer radius $R \to \infty$, the integral over the large circular arc vanishes for $k\geq1$ since $|z^{1 / 2}|=\mathcal{O}\left(R^{1 / 2}\right)$, $\left|\frac{\lambda_i(H_k)}{\lambda_i(H_k)-z}\right|= \mathcal{O}(R^{-1})$, and $\|M(M-zI)^{-1}\| = \mathcal{O}(R^{-1})$.  Similarly, as the inner radius $\epsilon \to 0$, the integral goes to zero. 
Thus, as $\epsilon \to 0$ and $R \to \infty$, the contributions to the integral come from $\mathcal{C}^+_{\delta}$ and $\mathcal{C}^-_{\delta}$. Also, as $\epsilon \to 0$, $\delta$ approaches 0. Then, we have
\begin{align*}
&\left\|M^{1 / 2} \bm{b}-\operatorname{Arn}_k(f ; M, \bm{b})\right\| \\ 
\leq&\lim_{\substack{\epsilon \to 0 \\ R \to \infty}}\left(\frac{1}{2 \pi} \int_{-R}^{-\epsilon}\left|(x + \delta i)^{1 / 2}\right| \prod_{i=1}^k\left|\frac{\lambda_i(H_k)}{\lambda_i(H_k)-(x+\delta_i)}\right|  \left\|\left(M-(x+\delta iI)\right)^{-1}M\right\| \mathrm{~d} x\right)\left\|\bm{\xi}_0^k\right\|\\
&-\left(\frac{1}{2 \pi} \int^{-R}_{-\epsilon}\left|(x - \delta i)^{1 / 2}\right| \prod_{i=1}^k\left|\frac{\lambda_i(H_k)}{\lambda_i(H_k)-(x+\delta_i)}\right|\left\|\left(M-(x-\delta iI)\right)^{-1}M\right\| \, \mathrm{d}x\right)\left\|\bm{\xi}_0^k\right\|\\
=&\frac{1}{\pi} \left(\int_0^{\infty}x^{1 / 2} \prod_{i=1}^k\left|\frac{\lambda_i(H_k)}{\lambda_i(H_k)+x}\right| \left\|(M+xI)^{-1}M\right\| \, \mathrm{d}x\right)\left\|\bm{\xi}_0^k\right\|.
\end{align*}
Next, we prove that $\left\|(M+xI)^{-1}M\right\|<1$. This is equivalent to showing that for any non-zero vector $v\in\mathbb{C}^n$ and $x>0$,
$$\left\|(M+xI)^{-1}M\bm{v}\right\|<\left\|\bm{v}\right\|.$$
To prove this, let $\bm{u}=(M+xI)^{-1}M\bm{v}$, and note that $M(\bm{u}-\bm{v})=-x\bm{u}.$
Taking the inner product with $\bm{u}-\bm{v}$, we have
$$
-x\operatorname{Re}\left(\bm{u},\bm{u}-\bm{v}\right)=\operatorname{Re}\left(M(\bm{u}-\bm{v}),\bm{u}-\bm{v}\right)>0.
$$
Thus, we have
\begin{align*}
    0>\operatorname{Re}\left(\bm{u},\bm{u}-\bm{v}\right)=\left\|\bm{u}\right\|^2-\operatorname{Re}\left(\bm{u},\bm{v}\right)\geq\left\|\bm{u}\right\|^2-\|\left(\bm{u},\bm{v}\right)\|\geq\left\|\bm{u}\right\|^2-\left\|\bm{u}\right\|\left\|\bm{v}\right\|
\end{align*}
which implies $\left\|\bm{u}\right\|<\left\|\bm{v}\right\|$ and, therefore, $\left\|(M+xI)^{-1}M\right\|<1$.  Thus,
    \begin{align*}
        &\left\|M^{1 / 2} \bm{b}-\operatorname{Arn}_k(f ; M, \bm{b})\right\|
        =\frac{1}{\pi} \left(\int_0^{\infty}x^{1 / 2} \prod_{i=1}^k\left|\frac{\lambda_i(H_k)}{\lambda_i(H_k)+x}\right|  \, \mathrm{d}x\right)\left\|\bm{\xi}_0^k\right\|,
    \end{align*}
    which completes the proof. 
\end{proof}
\begin{remark}\label{stop}
In the Arnoldi process, $H_k$ is generated naturally, and the FOM error $\left\|\bm{\xi}_0^k\right\|$ can be obtained with essentially one additional step.   Therefore, the error bound in \cref{error_square} provides an a posteriori error estimate that can be computed efficiently in practice, especially for small $k$. This motivates the following practical stopping criterion:
\begin{align*}
    \frac{1}{\pi}\left\|\bm{\xi}_0^k\right\| \int_0^{\infty}x^{1 / 2} \prod_{i=1}^k\left|\frac{\lambda_i(H_k)}{\lambda_i(H_k)+x}\right| \, \mathrm{d}x\leq\textit{tol}.
\end{align*}
where $\textit{tol}$ is a given tolerance.
\end{remark}

Next, building on the a posteriori error estimate in \cref{error_square}, we derive an a priori error estimate, which is summarized in the following theorem.
\begin{theorem}\label{error_lo}
    Given a positive-definite matrix $M \in \mathbb{C}^{n \times n}$ and a vector $\bm{b} \in \mathbb{C}^n$, for $k\geq2$, the error of the Arnoldi process for approximating the matrix square root is bounded by
\begin{align*}
 \left\|M^{1 / 2} \bm{b}-\operatorname{Arn}_k(f ; M, \bm{b})\right\|\leq\frac{\Gamma\!(3/4)}{2^{1/4}\pi}\frac{2k}{2k-3}(\sigma_{\max}(M))^{\frac{3}{2}}k^{-\frac{3}{4}}\left\|\bm{\xi}_0^k\right\|,
\end{align*}
where 
$\Gamma\!(\cdot)$ is the standard Gamma function, and $\sigma_{\max}(M)$ denotes the largest singular value of $M$. 
\end{theorem}
\begin{proof}
From Theorem \ref{error_square}, we know that
\begin{align*}
\left\|M^{1 / 2} \bm{b}-\operatorname{Arn}_k(f ; M, \bm{b})\right\| \leq\frac{1}{\pi}\int_0^{\infty}x^{1 / 2} \prod_{i=1}^k\left|\frac{\lambda_i(H_k)}{\lambda_i(H_k)+x}\right|\, \mathrm{d}x.
\end{align*}
Consider any $w\in \mathbb{C}$ with $\operatorname{Re}(w)>0$ and fix $x>0$ in $\mathbb{R}$, 
a straightforward calculation then yields
\begin{align*}
    \left|\frac{w}{w + x}\right| \leq \frac{|w|}{\sqrt{|w|^2 + x^2}}.
\end{align*}
The function on the right-hand side is increasing with respect to $|w|$.
Denote $\sigma_{\min}(M)$ as the smallest eigenvalue of $M$. Applying the interlacing inequalities for singular
values in Theorem 2 of \cite{thompson1972principal} gives
\begin{align*}
\max_i |\lambda_i(H_k)| \leq \sigma_{\max}(H_k)\leq \sigma_{\max}(M),
\qquad
\min_i |\lambda_i(H_k)| \geq \sigma_{\min}(H_k)\geq \sigma_{\min}(M).
\end{align*}
Letting $\mathcal{S}:=[\sigma_{\min}(M),\sigma_{\max}(M)]$, one arrives at
\begin{align*}
    \int_0^{\infty}x^{1 / 2} \prod_{i=1}^k\left|\frac{\lambda_i(H_k)}{\lambda_i(H_k)+x}\right|\, \mathrm{d}x&\leq \int_0^{\infty}x^{1 / 2} \prod_{i=1}^k\frac{|\lambda_i(H_k)|}{\sqrt{|\lambda_i(H_k)|^2 + x^2}}\, \mathrm{d}x\\
    &\leq \int_0^{\infty}x^{1 / 2} \left(\sup\limits_{|w|\in\mathcal{S}}\frac{|w|}{\sqrt{|w|^2 + x^2}}\right)^k\, \mathrm{d}x\\
    &\leq \int_0^{\infty}x^{1 / 2} \frac{(\sigma_{\max}(M))^{k}}{(\sqrt{(\sigma_{\max}(M))^2+x^2})^{k}}\, \mathrm{d}x.
\end{align*}
Letting $x=\sigma_{\max}(M)\tan(\theta)$, for $k\geq 2$, we obtain
\begin{align*}
\int_0^{\infty}
x^{1 / 2}
\frac{(\sigma_{\max}(M))^{k}}{\bigl(\sqrt{(\sigma_{\max}(M))^2+x^2}\bigr)^{k}}\, \mathrm{d}x
&= (\sigma_{\max}(M))^{3/2}
\int_0^{\frac{\pi}{2}}
\frac{\tan^{1/2}(\theta)\,\sec^2(\theta)}{\sec^{k}(\theta)}\,\mathrm{d}\theta \\
&= (\sigma_{\max}(M))^{3/2}
\int_0^{\frac{\pi}{2}}
\sin^{1/2}(\theta)\,\cos^{k-5/2}(\theta)\,\mathrm{d}\theta \\
&= \frac{(\sigma_{\max}(M))^{3/2}}{2}\,
\mathrm{B}\!\left(
\frac{3}{4},\,
\frac{2k-3}{4}
\right) \\
&= \frac{(\sigma_{\max}(M))^{3/2}}{2}\,
\frac{
\Gamma\!\left(3/4\right)
\Gamma\!\left((2k-3)/4\right)
}{
\Gamma\!\left(k/2\right)
},
\end{align*}
where $\mathrm{B}(\cdot, \cdot)$ is the Beta function, satisfying $\mathrm{B}(x,y) = \frac{\Gamma(x)\Gamma(y)}{\Gamma(x+y)}$.
Furthermore, following \cite{wendel1948note}, 
for $w>0$ and $l\in(0,1)$,   
\begin{align*}
    \frac{w}{(w+l)^{1-l}}\leq\frac{\Gamma(w+l)}{\Gamma(w)}\leq w^l,
\end{align*}
Then, with $w = (2k-3)/4$ and $l = 3/4$, we have,
\begin{align*}
\frac{\Gamma\!\left((2k-3)/4\right)}
{\Gamma\!\left(k/2\right)}
\le
\frac{4}{2k-3}
\left(\frac{k}{2}\right)^{\frac{1}{4}}=2^{\frac{7}{4}}\frac{k^{1/4}}{2k-3},
\end{align*}
Collecting all estimates yields the desired result.
\end{proof}

\begin{remark}\label{tight_bounds}
Summarizing the above results, we obtain the following estimates:
\begin{align}
\|M^{1 / 2} \bm{b}-\operatorname{Arn}_k(x^{1 / 2} ; M, \bm{b})\|&\leq\frac{1}{\pi} \left(\int_0^{\infty}x^{1 / 2} \prod_{i=1}^k\left|\frac{\lambda_i(H_k)}{\lambda_i(H_k)+x}\right| \, \mathrm{d}x\right)\left\|\bm{\xi}_0^k\right\|\label{boundM_1}\\
&\leq \frac{1}{\pi}\left(\int_0^{\infty}x^{1 / 2} \prod_{i=1}^k\frac{|\lambda_i(H_k)|}{\sqrt{|\lambda_i(H_k)|^2 + x^2}}\, \mathrm{d}x\right)\left\|\bm{\xi}_0^k\right\|\label{bound_mod}\\
&\leq\frac{\Gamma\!(3/4)}{2^{1/4}\pi}\frac{2k}{2k-3}(\sigma_{\max}(M))^{\frac{3}{2}}k^{-\frac{3}{4}}\left\|\bm{\xi}_0^k\right\|.\label{bound_gamma}
\end{align}
The inequalities \cref{boundM_1} and \cref{bound_mod} provide a posteriori error bounds and remain relatively tight. The last inequality \cref{bound_gamma} gives an a priori bound; since it relies on  $|\lambda_i(H_k)| \le \sigma_{\max}(M)$, it becomes loose when the eigenvalues of $M$ are widely distributed. 
\end{remark}
\begin{remark}\label{Arn_spd}
If the matrix $M$ is Hermitian, then using the results from \cite{chen2022error} and \cite{amsel2023near}, we obtain
\begin{align}
&\|M^{1 / 2} \bm{b}-\operatorname{Arn}_k(x^{1 / 2} ; M, \bm{b})\|\nonumber\\
&\leq\frac{1}{\pi} \left(\int_0^{\infty}x^{1 / 2} \prod_{i=1}^k\frac{\lambda_i(H_k)}{\lambda_i(H_k)+x}\frac{\lambda_{\max}(M)}{\lambda_{\max}(M)+x}  \, \mathrm{d}x\right)\left\|\bm{\xi}_0^k\right\|\nonumber\\
&\leq\frac{1}{\pi} \left(\int_0^{\infty}x^{1 / 2} \left(\frac{\lambda_{\max}(M)}{\lambda_{\max}(M)+x}\right)^{k+1}  \, \mathrm{d}x\right)\left\|\bm{\xi}_0^k\right\|\nonumber\\
&\leq \frac{1}{2\sqrt{\pi}}\frac{2k}{2k-1}(\lambda_{\max}(M))^{\frac{3}{2}}k^{-\frac{3}{2}}\|\bm{\xi}^k_0\|\label{sp_loose_bound}.
\end{align} 
This sharper result stems from the fact that the Hermitian matrix is unitarily diagonalizable with real eigenvalues, which allows for a more refined spectral analysis of $\left|\frac{\det(H_k)}{\det(H_k-zI)}\right|$ and $\|(M+xI)^{-1}M\|$.  
\end{remark}

Thus, as a byproduct of our analysis, we derive an error bound for the Hermitian case that is tighter than \eqref{sp_loose_bound}, the result from \cite{chen2022error}.  

\begin{theorem}\label{spd_shper}
    Given a positive-definite Hermitian matrix $M \in \mathbb{C}^{n \times n}$ and a vector $\bm{b} \in \mathbb{C}^n$, the error of the Arnoldi process for approximating the matrix square root is bounded by
\begin{align}
\|M^{1 / 2} \bm{b}-\operatorname{Arn}_k(x^{1 / 2} ; M, \bm{b})\|\leq \frac{1}{2\sqrt{\pi}}\frac{2k}{2k-1}\left(\bar{\lambda}_{k+1}(M)\right)^{\frac{3}{2}}k^{-\frac{3}{2}}\left\|\bm{\xi}_0^k\right\|\label{sp_Jensen},
\end{align}
where
\begin{align*}
    \bar{\lambda}_{k+1}(M):=\frac{1}{k+1}\left(\sum_{i=1}^k\lambda_i(M)+\lambda_1(M)\right) \leq \lambda_{\max}(M).
\end{align*}
\end{theorem}
\begin{proof}
Let $\lambda_1(M)\geq\lambda_2(M)\geq\cdots\geq\lambda_n(M)$ be the eigenvalues of $M$. Then, $\lambda_{\max}(M):=\lambda_1(M)$. 
Using the arithmetic–geometric mean inequality, we obtain
\begin{align*}
\prod_{i=1}^k \frac{\lambda_i(H_k)}{\lambda_i(H_k) + x}\frac{\lambda_1(M)}{\lambda_1(M)+x}
\leq
\left(\frac{1}{k+1}\left(
\sum_{i=1}^{k}
\frac{\lambda_i(H_k)}{\lambda_i(H_k) + x}
+\frac{\lambda_1(M)}{\lambda_1(M) + x}\right)\right)^{k+1}.
\end{align*}
Observing that $g_x(\lambda):=\frac{\lambda}{\lambda+x}$ is a concave function in $\lambda$, Jensen’s inequality yields
\begin{align*}
\frac{1}{k+1}\left(
\sum_{i=1}^{k}
g_x(\lambda_i(H_k))
+g_x(\lambda_1(M))\right)\leq g_x\left(\frac{\sum_{i=1}^k\lambda_i(H_k)+\lambda_1(M)}{k+1}\right).
\end{align*}
Then, 
\begin{align*}
&\|M^{1 / 2} \bm{b}-\operatorname{Arn}_k(x^{1 / 2} ; M, \bm{b})\|\\
&\leq\frac{1}{\pi} \left(\int_0^{\infty}x^{1 / 2} \prod_{i=1}^k\frac{\lambda_i(H_k)}{\lambda_i(H_k)+x} \frac{\lambda_1(M)}{\lambda_1(M) + x}\, \mathrm{d}x\right)\left\|\bm{\xi}_0^k\right\|\\
&\leq \frac{1}{\pi} \left(\int_0^{\infty}x^{1 / 2} \left(\frac{\frac{1}{k+1}\left(\sum_{i=1}^k\lambda_i(H_k)+\lambda_1(M)\right)}{\frac{1}{k+1}\left(\sum_{i=1}^k\lambda_i(H_k)+\lambda_1(M)\right)+x}\right)^{k+1} \, \mathrm{d}x\right)\left\|\bm{\xi}_0^k\right\|.
\end{align*}
Utilizing the interlacing inequality gives 
\begin{align*}
\|M^{1 / 2} \bm{b}-\operatorname{Arn}_k(x^{1 / 2} ; M, \bm{b})\|&\leq \frac{1}{\pi} \left(\int_0^{\infty}x^{1 / 2} \left(\frac{\bar{\lambda}_{k+1}(M)}{\bar{\lambda}_{k+1}(M)+x}\right)^{k+1} \, \mathrm{d}x\right)\left\|\bm{\xi}_0^k\right\|\nonumber\\
&\leq \frac{1}{2\sqrt{\pi}}\frac{2k}{2k-1}\left(\bar{\lambda}_{k+1}(M)\right)^{\frac{3}{2}}k^{-\frac{3}{2}}\left\|\bm{\xi}_0^k\right\|.
\end{align*}
This implies the bound \eqref{sp_Jensen} is sharper then the bound \eqref{sp_loose_bound} almost for all cases. 
\end{proof}

\begin{remark}
    Additionally, one can similarly derive an error estimate for the Arnoldi approximation of the matrix inverse square root:
\begin{align*}
\|M^{-1 / 2} \bm{b}-\operatorname{Arn}_k(x^{-1 / 2} ; M, \bm{b})\|\leq \frac{\Gamma(1/4)}{ 2^{3/4}\pi}\frac{2k}{2k-1}\left(\sigma_{\max}(M)\right)^{\frac{1}{2}}k^{-\frac{1}{4}}\|\bm{\xi}^k_0\|.
\end{align*}
If $M$ is Hermitian, then following \cite{amsel2023near} and the discussion above, we can derive the following tighter bound, which also improves upon the state-of-the-art result,
\begin{align*}
\|M^{-1 / 2} \bm{b}-\operatorname{Arn}_k(x^{-1 / 2} ; M, \bm{b})\|\leq \frac{1}{\sqrt{\pi}}\frac{2k+2}{2k+1}\left(\bar{\lambda}_{k+1}(M)\right)^{\frac{1}{2}}(k+1)^{-\frac{1}{2}}\|\bm{\xi}^k_0\|.
\end{align*}
\end{remark}


\subsection{Preprocessing} In practice, matrices arising from large-scale problems are often too large to be directly measured, stored, or analyzed, necessitating efficient preprocessing. 
To further reduce the cost, one direction is to employ data-sparse approximations, such as hierarchical or low-rank representations \cite{bjarkason2019pass,clarkson2017low,datta2010numerical,hackbusch2015hierarchical,kishore2017literature,lin2011fast}. 
These methods accelerate matrix-vector multiplications by compressing the underlying structure of the matrix. Thus, we generalize the results from \cref{error_square} to get an error bound for approximating the square root of a perturbed matrix using the Arnoldi iteration.


\begin{corollary}\label{error_square_real}
Let $M \in \mathbb{C}^{n \times n}$ be a positive-definite matrix such that for any unit vector $\bm{x}$, $\operatorname{Re}(\bm{x}^*M\bm{x}) \geq \mu_1^2$ for some $\mu_1>0$, and let $\bm{b} \in \mathbb{C}^n$ be a given vector. We consider a perturbed matrix $\widetilde{M}$, satisfying $\operatorname{Re}(\bm{x}^*\widetilde{M}\bm{x}) \geq \mu_2^2$ for some $\mu_2>0$, as well as 
$\|M - \widetilde{M}\| \leq \epsilon \|M\|$ for some $\epsilon>0$. Then, the error of the Arnoldi process for approximating the matrix square root is bounded by
\begin{align}\label{bound_prepo}
&\|M^{1/2}\bm{b}-\operatorname{Arn}_k(x^{1 / 2} ; \widetilde{M}, \bm{b})\|\nonumber\\
&\leq 
\frac{\epsilon\sigma_{\max}(M)}{\mu_1+\mu_2}\|\bm{b}\|+ \frac{\Gamma\!(3/4)(1+\epsilon)^{3/2}}{2^{1/4}\pi}\frac{2k}{2k-3}(\sigma_{\max}(M))^{\frac{3}{2}}k^{-\frac{3}{4}}\left\|\bm{\xi}_0^k\right\|.
\end{align}
\end{corollary}

\begin{proof}
Based on $\widetilde{M}$, we  apply the Arnoldi process to compute the matrix square root. Now the problem becomes
\begin{align*}
    \|M^{1 / 2} \bm{b}-\operatorname{Arn}_k(x^{1 / 2} ; \widetilde{M}, \bm{b})\|\leq \|M^{1 / 2} \bm{b}- \widetilde{M}^{1/2}\bm{b}\|+\|\widetilde{M}^{1/2}\bm{b}-\operatorname{Arn}_k(x^{1 / 2} ; \widetilde{M}, \bm{b})\|.
\end{align*}
The first term can be bounded using the perturbation estimate in \cite{schmitt1992perturbation}, thus,
\begin{align*}
    \|M^{1 / 2} \bm{b}- \widetilde{M}^{1/2}\bm{b}\|\leq \|M^{1 / 2}- \widetilde{M}^{1/2}\|\|\bm{b}\|\leq \frac{1}{\mu_1+\mu_2}\|M-\widetilde{M}\|\|\bm{b}\|\leq\frac{\epsilon\sigma_{\max}(M)}{\mu_1+\mu_2}\|\bm{b}\|.
\end{align*}
By \cref{error_lo}, the second term is bounded by:
\begin{align*}
   \|\widetilde{M}^{1/2}\bm{b}-\operatorname{Arn}_k(x^{1 / 2} ; \widetilde{M}, \bm{b})\|\leq \frac{\Gamma\!(3/4)}{2^{1/4}\pi}\frac{2k}{2k-3}(\sigma_{\max}(\widetilde{M}))^{\frac{3}{2}}k^{-\frac{3}{4}}\left\|\bm{\xi}_0^k\right\|.
\end{align*}
By Weyl's inequality \cite{bhatia1997matrix}, we have $|\sigma_k(M)-\sigma_k(\widetilde{M})| \leq \|M-\widetilde{M}\|\leq \epsilon\|M\|=\epsilon\sigma_{\max}(M)$. Combining the above estimates yields  \eqref{bound_prepo}
\end{proof}

\section{Numerical experiments and discussion} \label{numerical}In this section, we report on numerical studies of the Arnoldi iteration for computing the matrix square root applied to a vector, that is, $M^{1/2}\bm{b}$. 
We first evaluate the a \textit{posteriori} bounds \eqref{boundM_1} and \eqref{bound_mod}, and the a \textit{priori} bound \eqref{bound_gamma} using non-Hermitian matrices. Next, we compare the a \textit{priori} bounds \eqref{sp_loose_bound} and \eqref{sp_Jensen} in the Hermitian setting to demonstrate the improvement yielded by our new bound \eqref{sp_Jensen}. Then, we investigate the dependence of the approximation error on the largest singular value $\sigma_{\max}(M)$ and the iteration count $k$. Finally, we explore the use of the Arnoldi iteration for computing $M^{1/2}\bm{b}$ in the simulation of particulate suspensions to demonstrate the effectiveness of the proposed method in practical applications.  Consistently across examples, the Arnoldi approximation is terminated based on the relative residual norm for solving $M\bm{x}=\bm{b}$, with a tolerance of $10^{-2}$. In particular, all integrals in the bounds are evaluated numerically using \texttt{integral} in MATLAB with a relative tolerance of $10^{-8}$ and an absolute tolerance of $10^{-12}$.

\subsection{Error bounds}\label{dis_bound}
Here, we compare the error bound estimates for cases when the spectrum of the matrix is uniformly distributed versus when the eigenvalues are clustered.  We test this on both non-Hermitian and Hermitian examples.

\subsubsection{The non-Hermitian case}
Consider a positive definite non-Hermitian matrix $M\in\mathbb{R}^{500\times 500}$, constructed as  
$M = M_0 + K$. 
The symmetric component $M_0$ is defined using $M_0 = Q \mathrm{diag}(\Lambda) Q^T$, 
where $Q$ is an orthogonal matrix and $\Lambda$ is a diagonal matrix. 
The matrix $K$ is introduced as a skew-symmetric term derived from a general random matrix $R$:$$K = \tfrac{1}{2}(R - R^T).$$

First, we construct $\Lambda$ such that its eigenvalues are uniformly distributed in the interval $[1,1000]$. \cref{wide_M} displays the eigenvalues in the complex plane and the distribution of their modulus. We see that the eigenvalues of $M$ are widely distributed. 
\begin{figure}[htbp]
    \centering
    \begin{minipage}[t]{0.5\textwidth}
        \centering
        \includegraphics[width=\textwidth]{./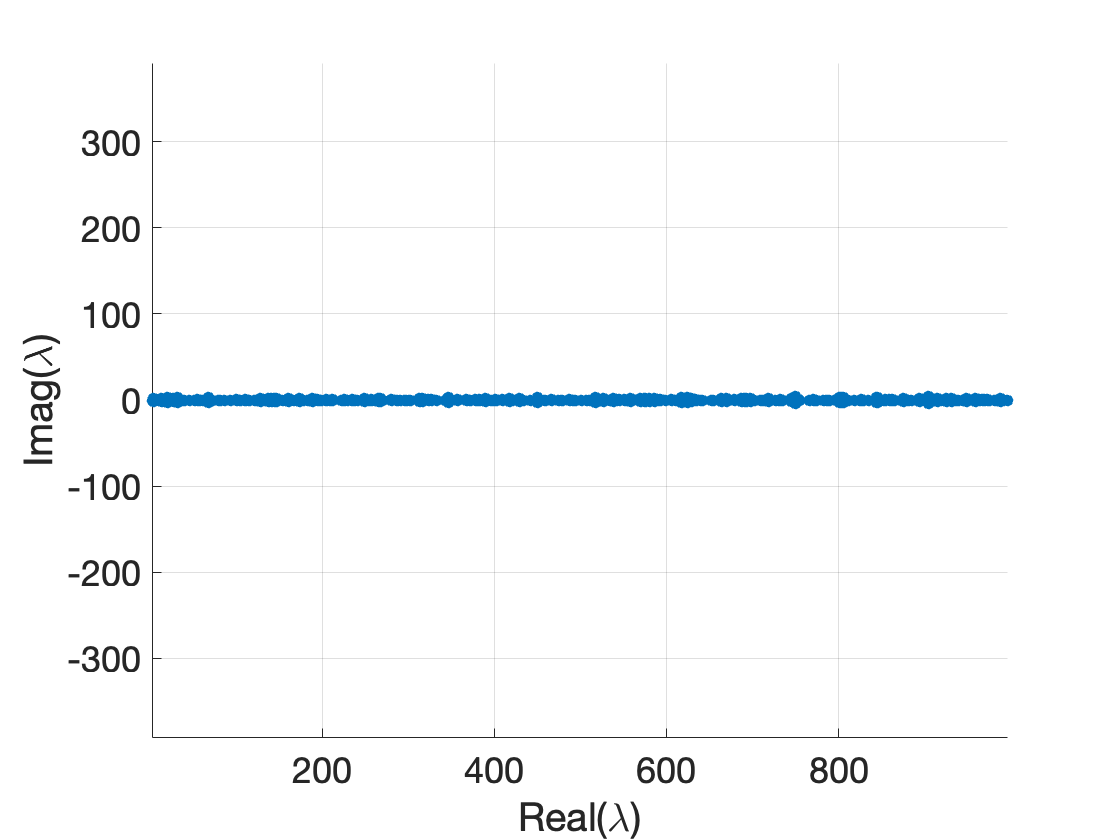}
    \end{minipage}%
    \hfill
    \begin{minipage}[t]{0.5\textwidth}
        \centering
        \includegraphics[width=\textwidth]{./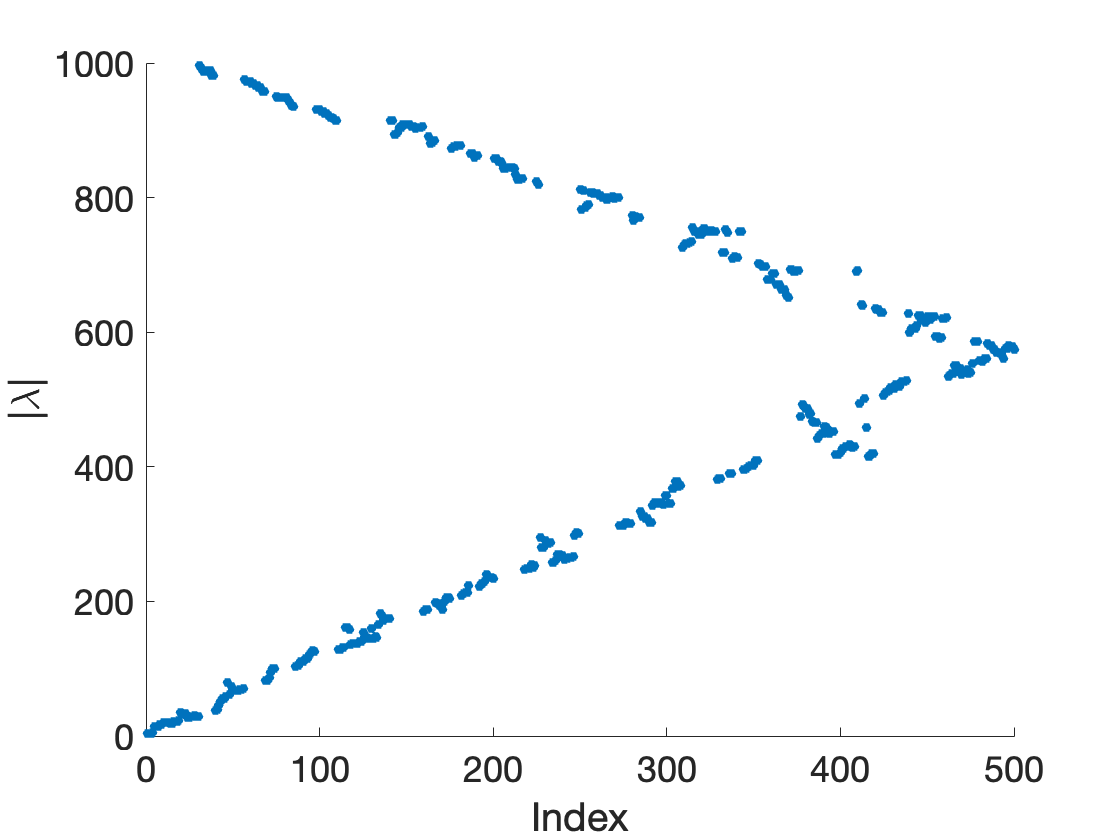}
    \end{minipage}
    \caption{Eigenvalues of the uniformly-distributed spectrum matrix in the complex plane (left) and their magnitudes (right).}\label{wide_M}
\end{figure}

Setting $\bm{b}$ as an all-ones vector, the error of the Arnoldi process  for computing $M^{1/2}\bm{b}$  are shown in \cref{wide_M_error}. We observe that the intermediate bound \cref{bound_mod} remains relatively sharp. The overestimation in the a priori bound \cref{bound_gamma} mainly arises from the rough estimate $|\lambda_i(H_k)| \le \sigma_{\max}(M)$, which is consistent with our earlier discussion in \cref{tight_bounds}. 

\begin{figure}[h]
    \centering    \includegraphics[width=0.8\textwidth]{./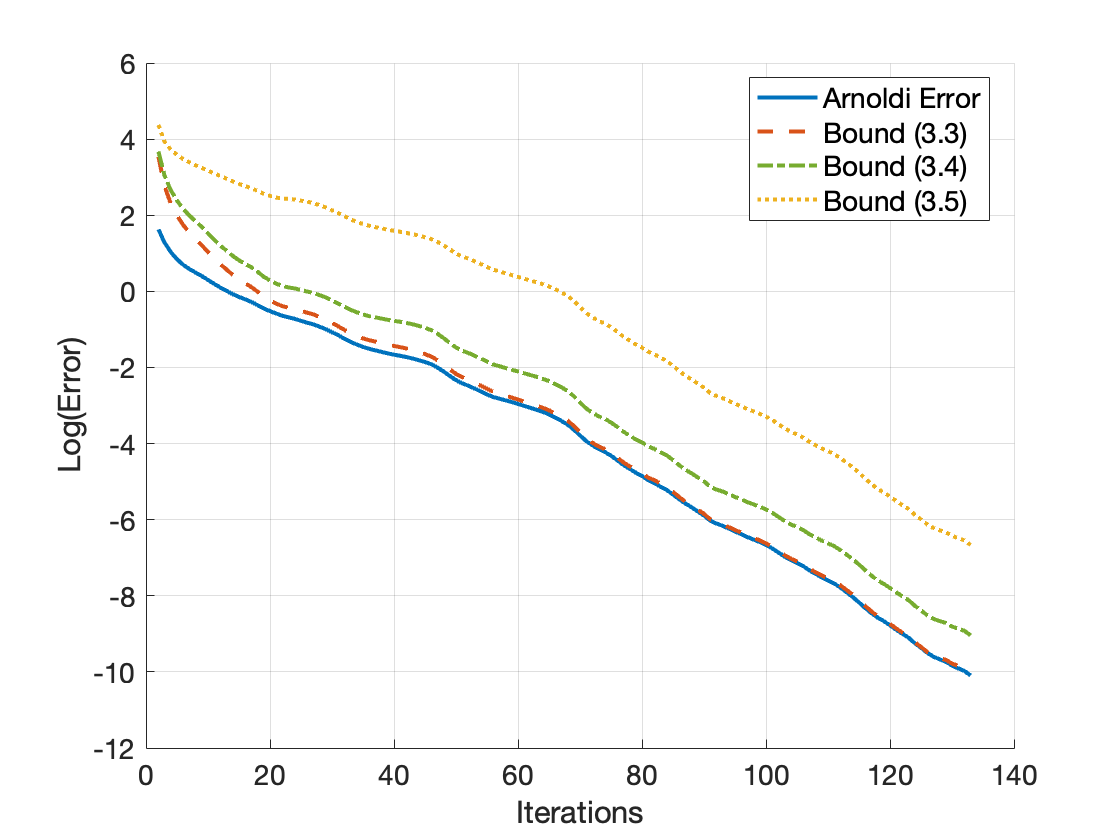}
    \caption{Error of the Arnoldi process for a non-Hermitian matrix with a uniformly-distributed spectrum.}\label{wide_M_error}
\end{figure}

Next, let $\Lambda = [\lambda_{\text{cluster}}; \lambda_{\text{outlier}}]$, where 95\% of the eigenvalues are normally distributed around 1000:$$\lambda_{\text{cluster}} \sim 1000 + 100\,\mathcal{N}(0,1).$$
The remaining outliers are drawn from a smaller normal distribution centered near 10:
$$\lambda_{\text{outlier}} \sim 10 + 5\,\mathcal{N}(0,1).$$
The distribution of eigenvalues and the magnitudes $|\lambda_i(M)|$ are displayed in \cref{cluster_M}. As shown, most of the $|\lambda_i(M)|$ lie around 1000, with only a few small isolated values. 

\begin{figure}[htbp]
    \centering
    \begin{minipage}[t]{0.5\textwidth}
        \centering
        \includegraphics[width=\textwidth]{./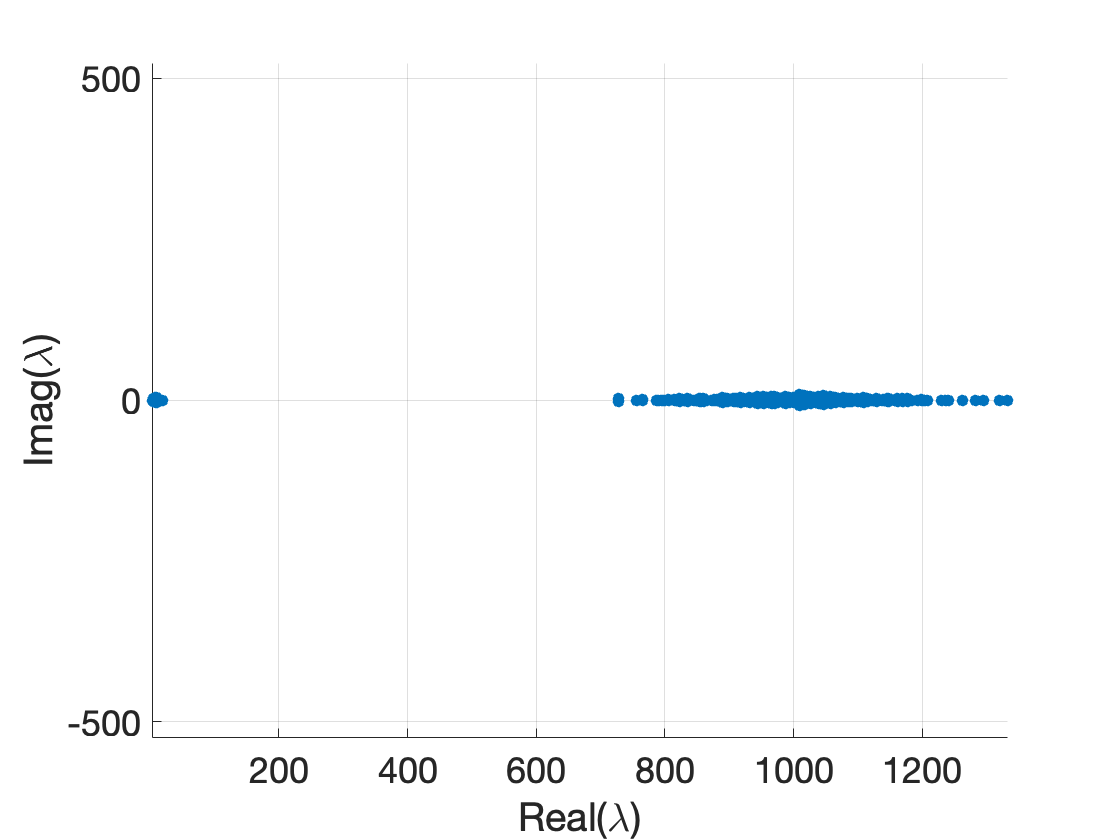}
    \end{minipage}%
    \hfill
    \begin{minipage}[t]{0.5\textwidth}
        \centering
        \includegraphics[width=\textwidth]{./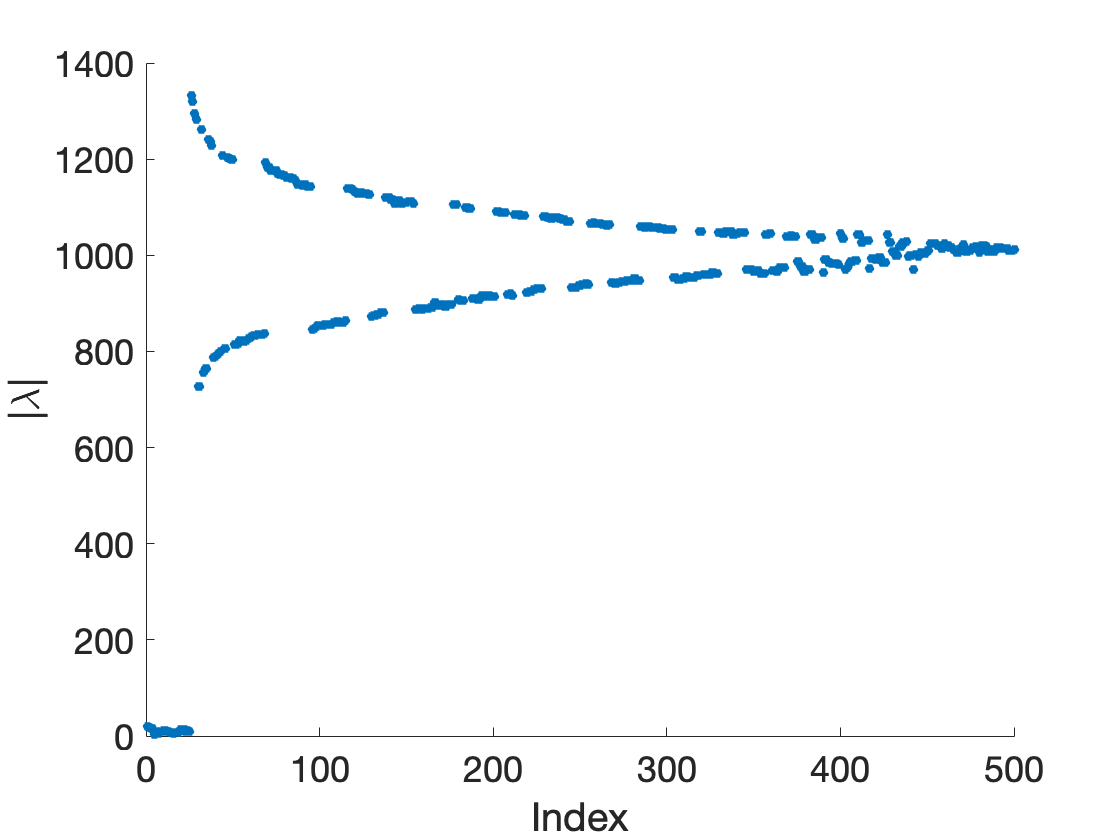}
    \end{minipage}
    \caption{Eigenvalues of the clustered spectrum matrix in the complex plane (left) and their magnitudes (right).}\label{cluster_M}
\end{figure}

The vector $\bm{b}$ is taken to be the average of 100 eigenvectors corresponding to the 100 eigenvalues of largest magnitude. Then, the corresponding results of the Arnoldi process 
are presented in \cref{cluster_M_error}. Due to the clustered eigenvalue distribution, the method exhibits fast convergence, with a priori bounds \eqref{bound_gamma} providing a sharper estimate in this regime as expected.  

\begin{figure}[h]
    \centering    \includegraphics[width=0.8\textwidth]{./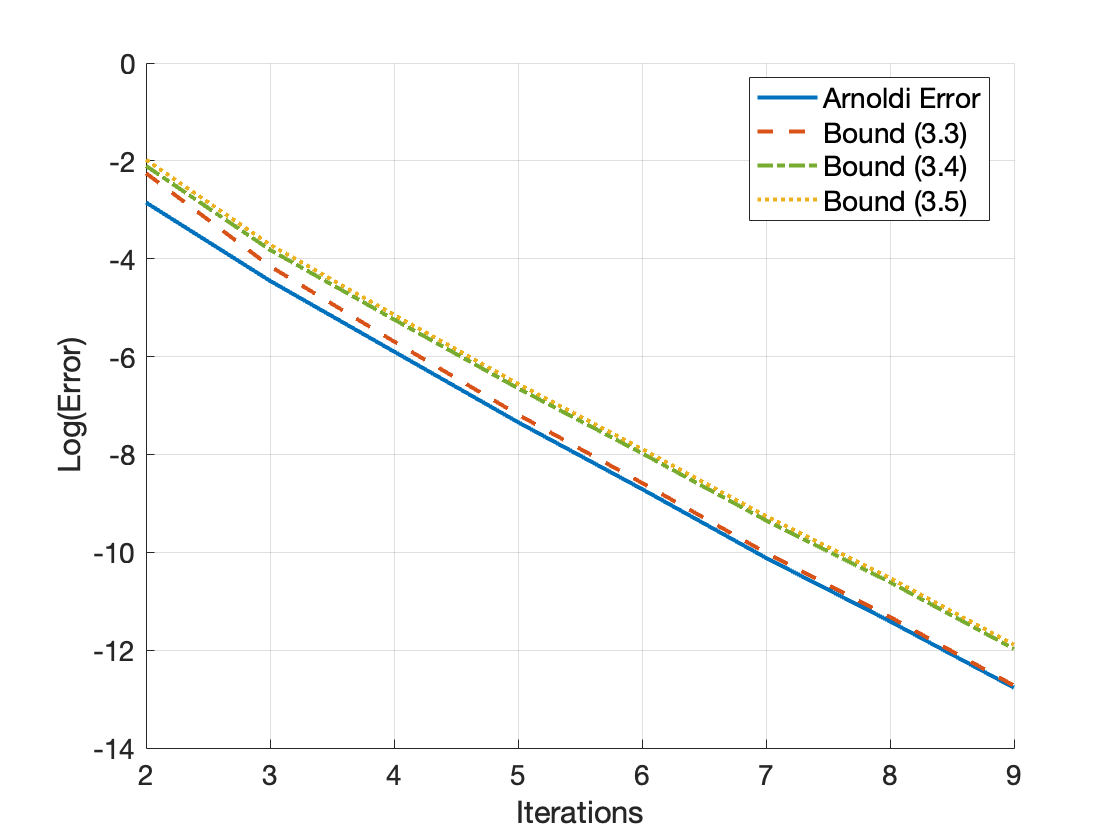}
    \caption{Error of the Arnoldi process for a non-Hermitian matrix with a clustered spectrum.}\label{cluster_M_error}
\end{figure}

\subsubsection{The Hermitian case} Next, we construct a $500\times500$ Hermitian matrix as
$M = Q\,\mathrm{diag}(\Lambda)\,Q^T$, where the eigenvalues of the diagonal matrix $\Lambda$ follow a uniform distribution over $[1, 1000]$. 
In  \cref{cluster_M_error_spd_uni}, we report the error and bounds in \cref{Arn_spd} and \cref{spd_shper}. We observe that, as $k$ increases, the Ritz value $\lambda_i(H_k)$ approaches a true eigenvalue of $M$, leading the bound \eqref{boundM_1} to become sharper and align more closely with the actual error. We also plot the evolution of $\bar{\lambda}_{k+1}(M)$, with $\sigma_{\max}(M)$ shown in \cref{spec_com_uni} for comparison. It can be observed that when the eigenvalues are uniformly distributed and lie close to one another, bound \eqref{sp_Jensen}  yields only a minimal improvement over bound \eqref{sp_loose_bound}.

\begin{figure}[h]
    \centering    \includegraphics[width=0.8\textwidth]{./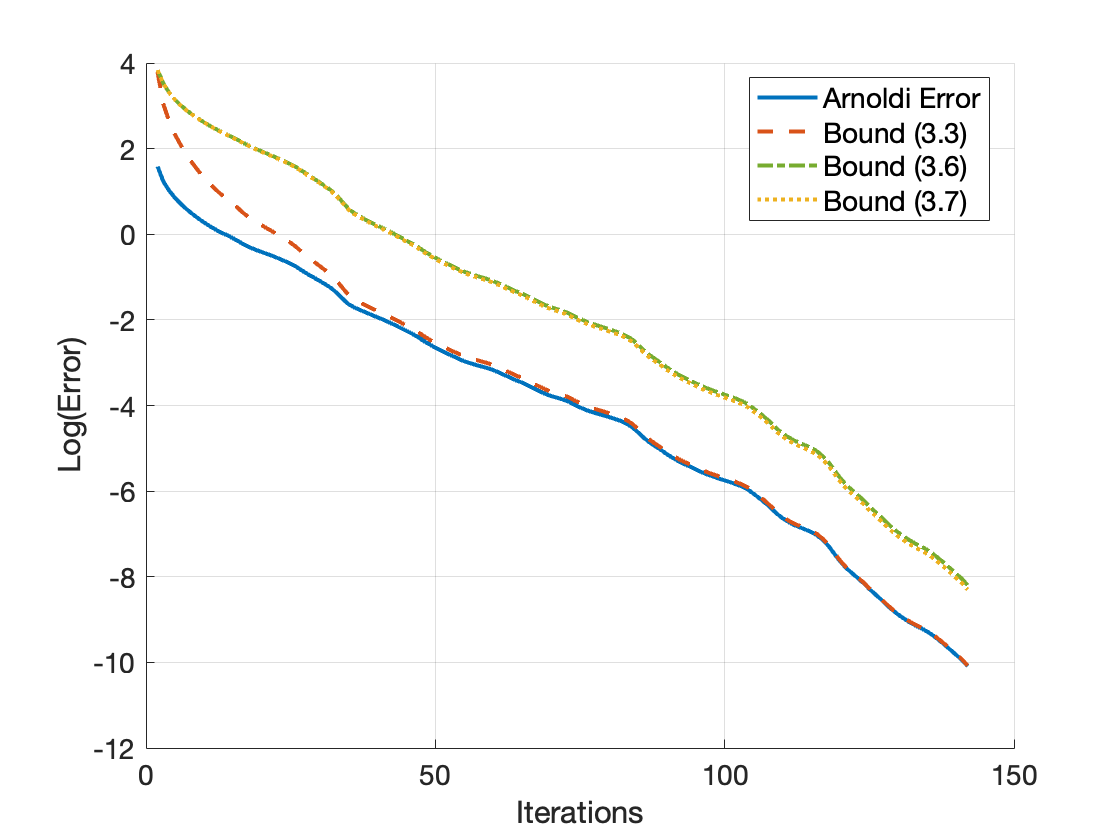}
    \caption{Error of the Arnoldi process for a Hermitian matrix with a uniformly-distributed spectrum.}\label{cluster_M_error_spd_uni}
\end{figure}

\begin{figure}[h]
    \centering    \includegraphics[width=0.8\textwidth]{./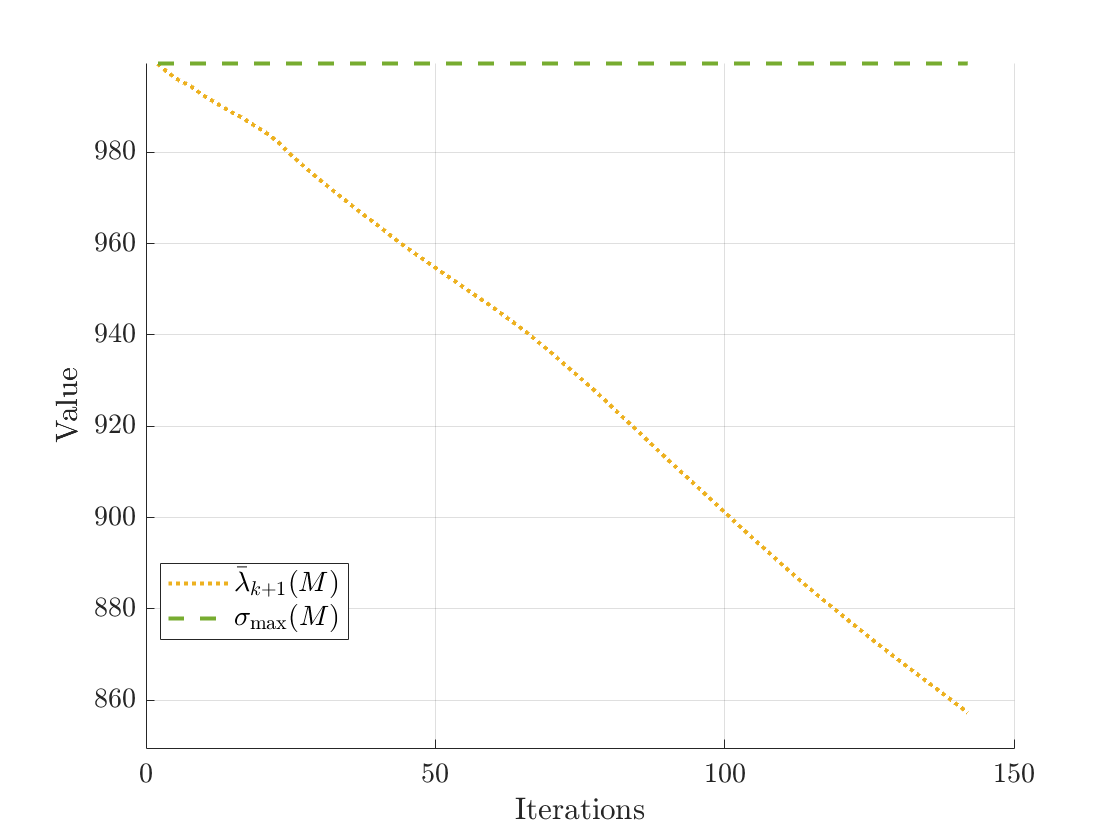}
    \caption{Evolution of $\bar{\lambda}_{k+1}$ for a symmetric matrix with a uniformly-distributed spectrum.}\label{spec_com_uni}
\end{figure}

Next, we take $\Lambda = [\lambda_{\text{cluster}};\, \lambda_{\text{outlier}}]$ with 99\% of the eigenvalues tightly clustered near 10, 
and the remaining 1\% sampled as outliers  around 1000. The same $\bm{b}$ is used. The results are displayed in \cref{cluster_M_error_spd} and \cref{spec_com}. 
Clearly, as $k$ increases, the sequence $\bar{\lambda}_{k+1}(M)$ decreases monotonically, and becomes progressively smaller than $\sigma_{\max}(M)$. Therefore, as expected in \cref{tight_bounds}, when the larger eigenvalues are more widely spread, bound \eqref{sp_Jensen} exhibits a tighter estimate compared to bound \eqref{sp_loose_bound}.

\begin{figure}[h]
    \centering    \includegraphics[width=0.8\textwidth]{./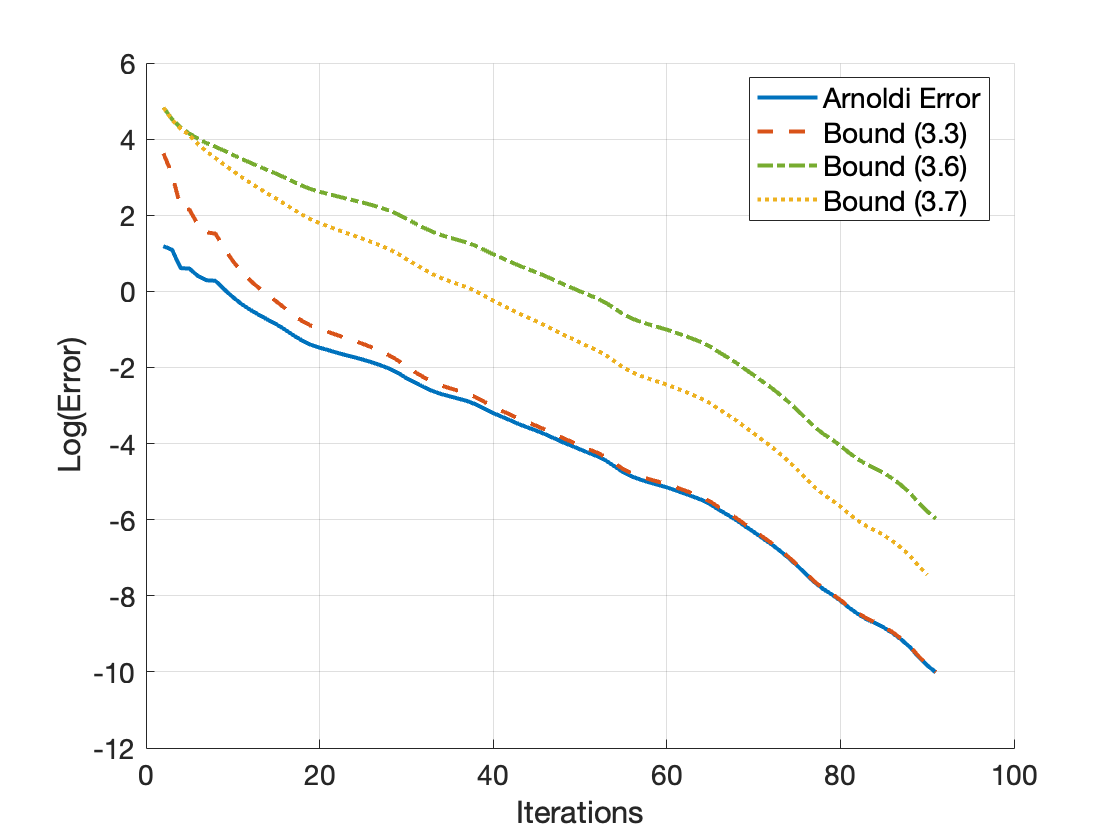}
    \caption{Error of the Arnoldi process for a symmetric matrix with a clustered spectrum.}\label{cluster_M_error_spd}
\end{figure}

\begin{figure}[h]
    \centering    \includegraphics[width=0.8\textwidth]{./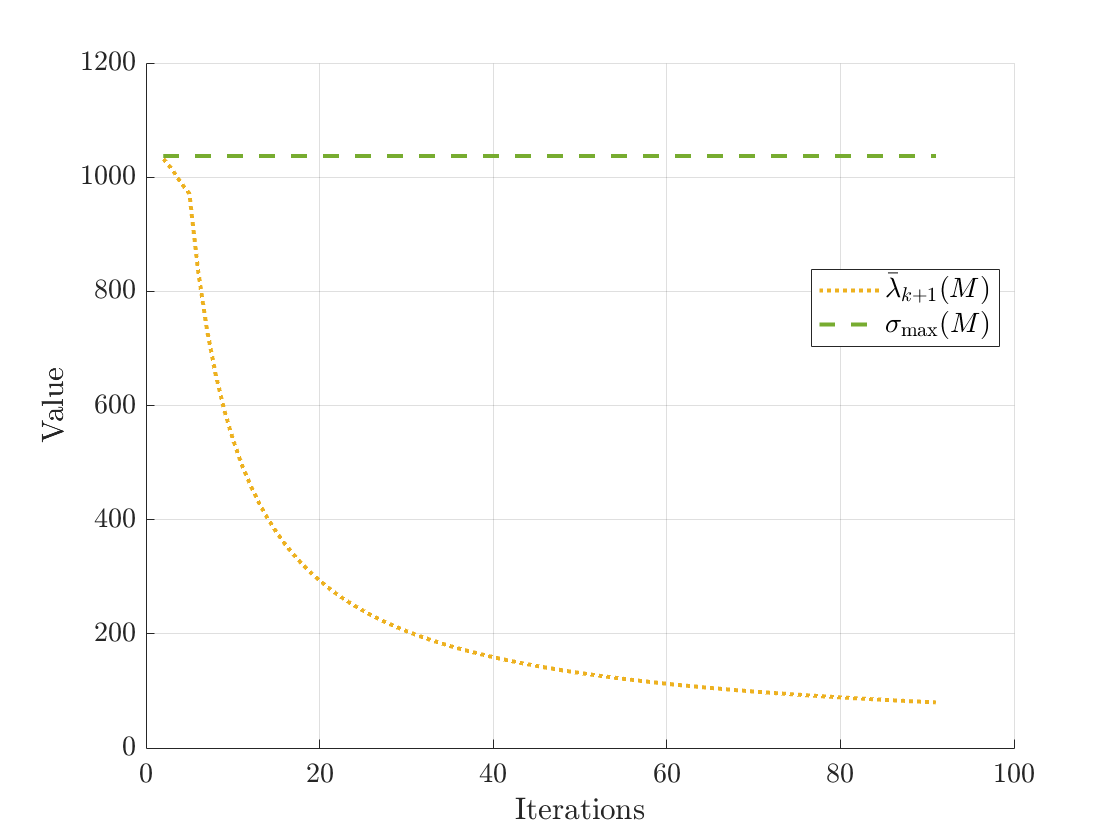}
    \caption{Evolution of $\bar{\lambda}_{k+1}$ for a symmetric matrix with a clustered spectrum.}\label{spec_com}
\end{figure}

\subsection{Scaling behavior} 

In this example, we test the algorithm's dependence on the number of Arnoldi iterations $k$ and the largest singular value $\sigma_{\max}(M)$. 
Here, we construct $M$ from the discretization of the convection-diffusion problem. The continuous convection-diffusion operator is defined as
$$Lu:=-\eta u^{{\prime}{\prime}}(x) + u^{\prime}(x), \quad x \in [0,1],$$
subject to homogeneous Dirichlet boundary conditions. Here, $\eta$ is the diffusion coefficient. For small values of $\eta$, the problem is convection-dominated. To accurately capture the convection effects while maintaining numerical stability, we employ an upwind scheme. A uniform numerical grid with grid spacing $h = \frac{1}{n}$ is adopted, where the discrete points are defined as $x_i = ih$, $i = 0, 1,2,\dots, n$. The resulting finite-difference scheme of the operator takes the following form:
$$ -\eta \frac{u_{i+1} - 2u_i + u_{i-1}}{h^2} + \frac{u_i - u_{i-1}}{h},$$
where $u_{i}$ represents the numerical approximation to $u(x_i)$. Let $M$ be the tridiagonal matrix obtained from the above finite-difference discretization of $L$, which is non-Hermitian. 


Here, we take $\bm{b}$ to be the vector with all entries equal to one, and choose $\eta$ = 0.1. In this case, $M$ exhibits a relatively dispersed eigenvalue distribution. We compare different error bounds for approximating the square root of $M$ using the Arnoldi process described above (see \cref{fig:bound2}).  Similar phenomena as in \Cref{dis_bound} can also be seen here. Although the bound \cref{bound_gamma} is loose, its overall trend matches that of the true error, providing insights into the inherent scaling behavior.

\begin{figure}[h]
    \centering       \includegraphics[width=0.8\textwidth]{./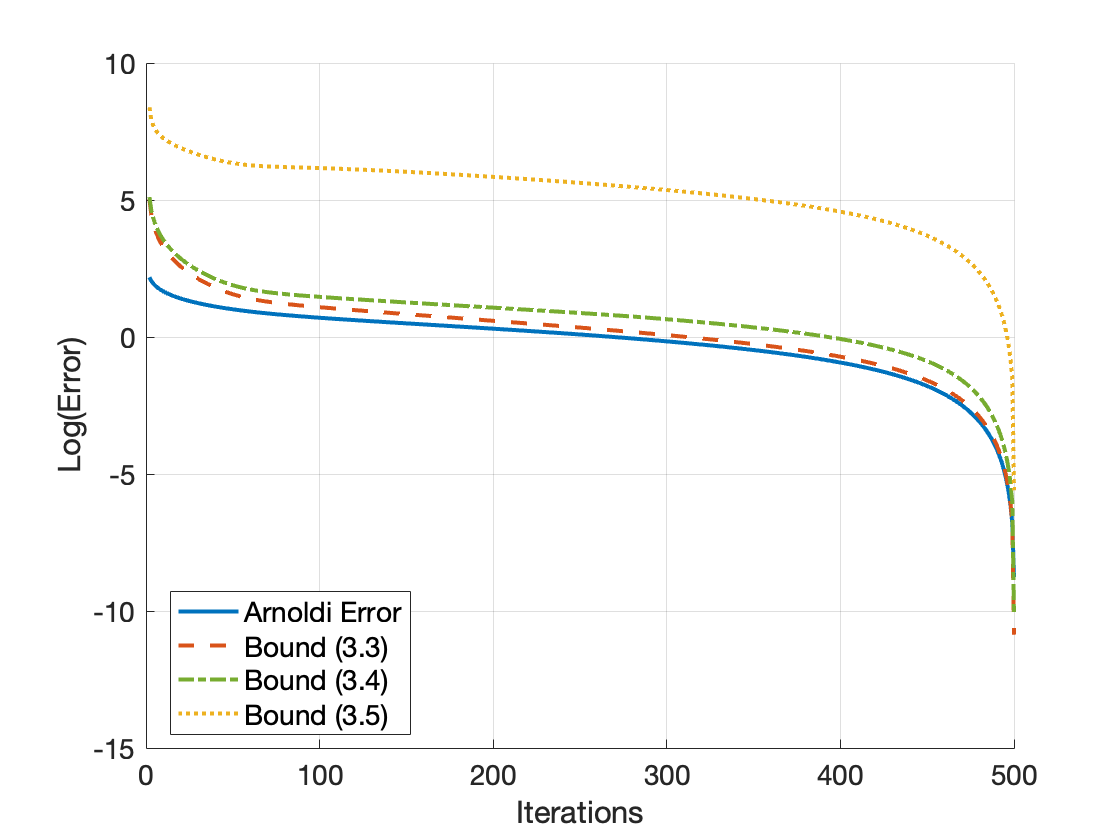}
    \caption{Error of the Arnoldi process for a  convection-diffusion problem with $n=500$.}
    \label{fig:bound2}
\end{figure}

Next, we examine how the error depends on $\sigma_{\max}(M)$  and $k$, and whether the observed behavior follows the scaling predicted by bound \eqref{bound_gamma}. To this end,
 we introduce the scaling term derived from   \eqref{bound_gamma}, defined as
\begin{align*}
    \frac{2^{1/4}\pi}{\Gamma\!(3/4)}\frac{2k-3}{2k}\frac{\left\|M^{1 / 2}\bm{b}-\operatorname{Arn}_k(f ; M, \bm{b})\right\|}{\left\|\bm{\xi}_0^k\right\|}.
\end{align*}
In the following tests, we use the stopping criterion in \cref{stop} with \textit{tol} = 0.05.

The results of approximating $M^{1/2}\bm{b}$ for matrices of varying dimensions arising from the convection-diffusion problem are summarized in  \cref{cv}. We see that as $n$ increases, maintaining the same accuracy requires a larger number of iterations. The log-log plot of the scaling term decaying against the iteration count $k$ is shown in \cref{cv_k}. The convergence rate exhibits a decay rate of approximately $\mathcal{O}(k^{-3/4})$ with respect to the iteration count $k$. 
This observation is consistent with the theoretical result established in \cref{error_lo}. Additionally, we fix $k=300,500,700$ and plot the scaling term against the largest singular value on a log-log scale in \cref{cv_cd}. The results show that the fitted slopes vary with $k$, but, as expected, they all fall below the theoretical prediction.

\begin{table}[h]\label{cv}
\renewcommand{\arraystretch}{1.1}
\centering
\begin{center}
\caption{Condition number of $M$ and  iteration counts $n$ and errors of the Arnoldi process for a convection-diffusion problem.}  
\begin{tabular}{c|c|c|c}
\hline $n$&$\sigma_{\max}(M)$ & error & Iteration Counts $k$ \\
\hline 
1000 & 202320.64  & 0.03083  & 889  \\
1200 & 291138.58  & 0.03053  & 1071 \\
1400 & 396074.49  & 0.03061  & 1253 \\
1600 & 517128.36  & 0.03090  & 1435 \\
1800 & 654300.20  & 0.03132  & 1617 \\
2000 & 807590.00  & 0.03132  & 1800 \\
\hline
\end{tabular}
\end{center}
\end{table}


\begin{figure}[h]
    \centering    \includegraphics[width=0.8\textwidth]{./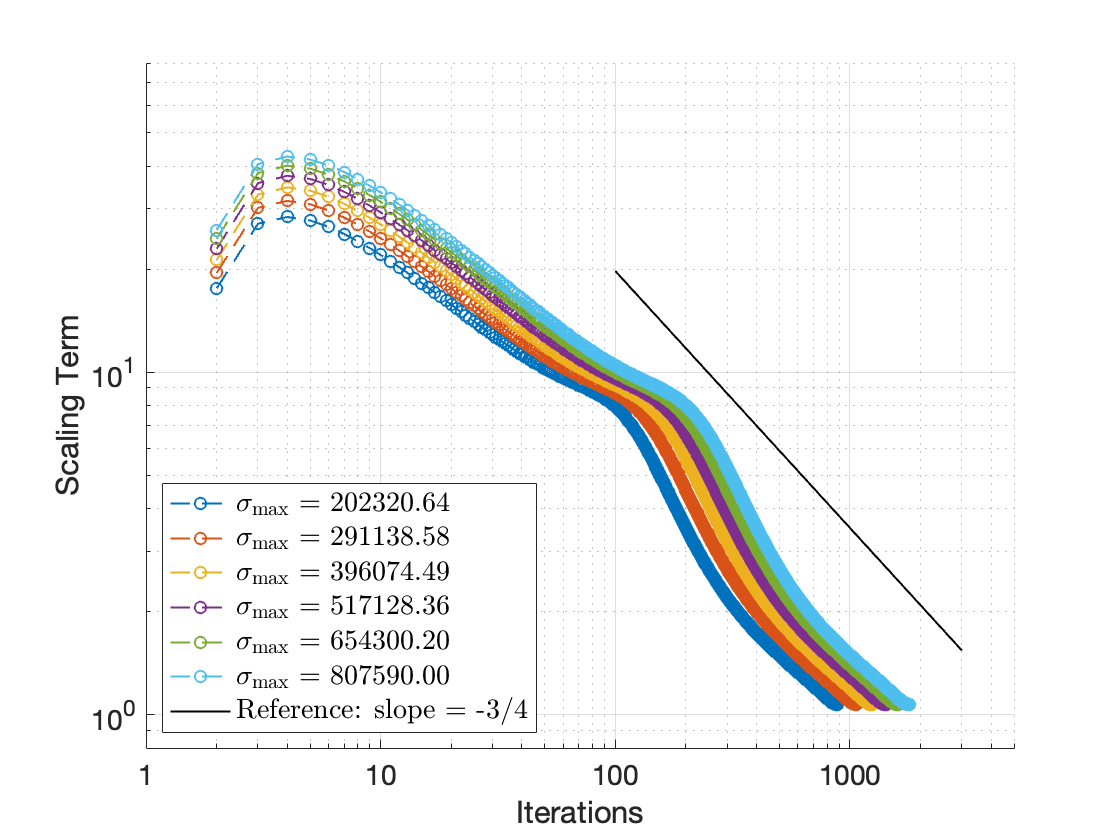} 
    \caption{Log-log plot of error decay with varying condition numbers against iteration count $k$  for a convection-diffusion problem.}\label{cv_k}
\end{figure}

\begin{figure}[h]
    \centering    \includegraphics[width=0.8\textwidth]{./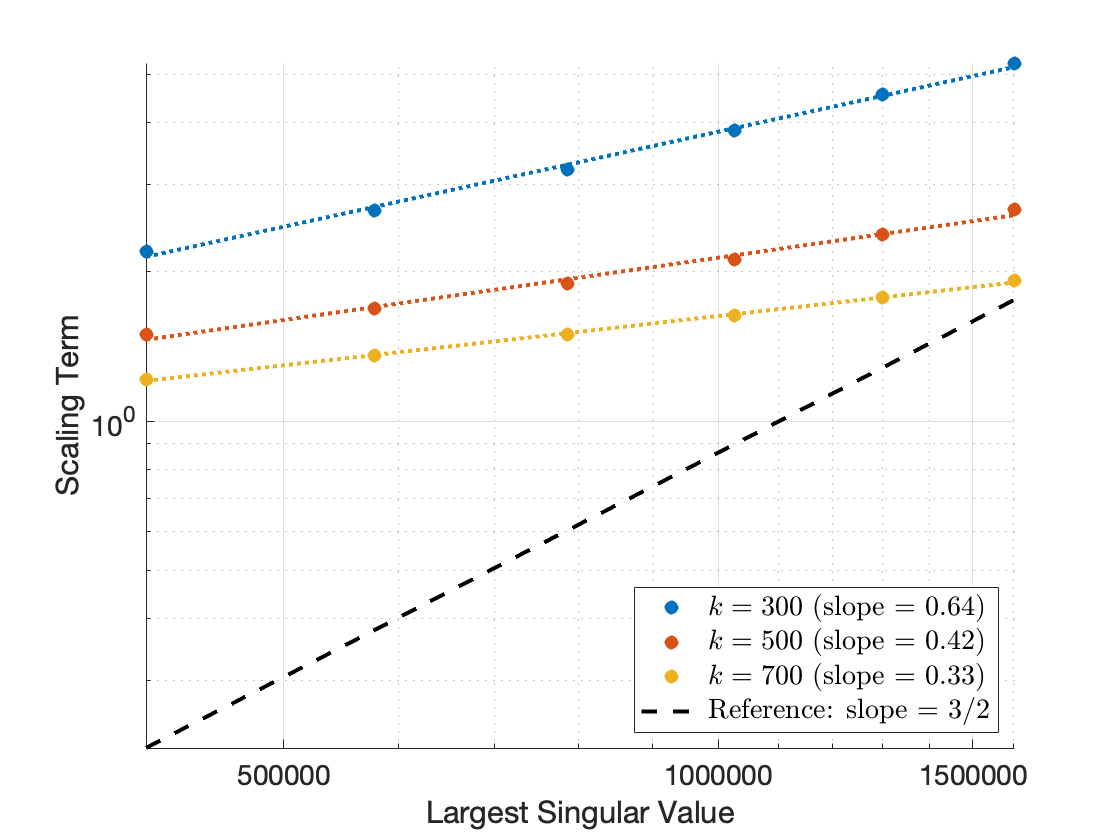} 
    \caption{Log-log plot of the error with $k=150$ against the largest singular value for a convection-diffusion problem.}\label{cv_cd}
\end{figure}

\subsection{Practical application} The study of particulate suspensions is of great interest, and plays critical roles in diverse fields such as materials design \cite{makey2020universality}, biological systems \cite{makey2020universality}, micro-robotics \cite{yang2020reconfigurable}, and food processing \cite{kim2020modeling}. Immersed in a fluid environment, particles' mobility is dictated by the hydrodynamic interactions between the particles. At micro scales, particles are also subject to thermal fluctuations, in the form of Brownian motion. 
The configuration of particles evolves over time and is governed by 
the overdamped limit of Langevin dynamics \cite{ermak1978brownian}. Thus, the computation of particles' Brownian displacements crucially rely on the mobility matrix, and its square root, both dependent on all particles' instantaneous positions.  Efficiently computing these two matrices remains a significant computational challenge. 

Since hydrodynamic interactions depend on the separation distance between particles, the particle interactions weaken as the distance increases \cite{ma2022fast}. As a result, far-field blocks in the mobility matrix exhibit low-rank structure, and can then be efficiently approximated.
To this end, the Hydrodynamic Interaction Graph Neural Network (HIGNN) framework,  proposed in \cite{ma2022fast,ma2024shape}, maps the many-body  hydrodynamic interactions onto a graph neural network. By employing  hierarchical-matrices via adaptive cross approximation  \cite{hackbusch2015hierarchical}, the computational complexity for using HIGNN to compute particles' velocities given any forces can be reduced to nearly linear \cite{ma2025mathcal}.

In the following tests, $M$ is the mobility  matrix obtained from the HIGNN model and has been preprocessed into a hierarchical-matrix form. 
Also, we assume that a uniform gravitational force $\bm{b}$ is applied to each particle. This numerical example focuses on evaluating the effectiveness of the Arnoldi iteration on computing $M^{1/2}$ in a long-time dynamic simulation of particles' sedimentation. In the simulations, the explicit Euler method was adopted as the temporal integrator to update particles' positions from their velocities, with a time step of $\Delta t = 0.005$. At each time step, the HIGNN is applied to predict the velocities of all particles. 
To generate the initial cubic-lattice like configuration of particles, the particles are first positioned at grid points with a spacing of 4, and then each particle is randomly perturbed with a displacement of magnitude 0.2 from its original position. 
With this initial configuration, we simulate particles sedimenting in an unbounded domain. The number of particles, $N$, considered here varies from $20^3$ to $100^3$. For such large systems, direct computation of the square root of the mobility matrix $M$ can be computationally prohibitive. To address this challenge and also validate the accuracy of the Arnoldi approximation in this practical setting, we apply the Arnoldi process twice to approximate $M\bm{b}=M^{1/2}(M^{1/2}\bm{b})$. Thus, the relative error is defined as $\frac{\|M\bm{b} - \operatorname{Arn}_k(x^{1 / 2} ; M, \operatorname{Arn}_k(x^{1 / 2} ; M, \bm{b}))\|}{\|M\bm{b}\|}$. 

\begin{figure}[h!]
    \centering    \includegraphics[width=1\textwidth]{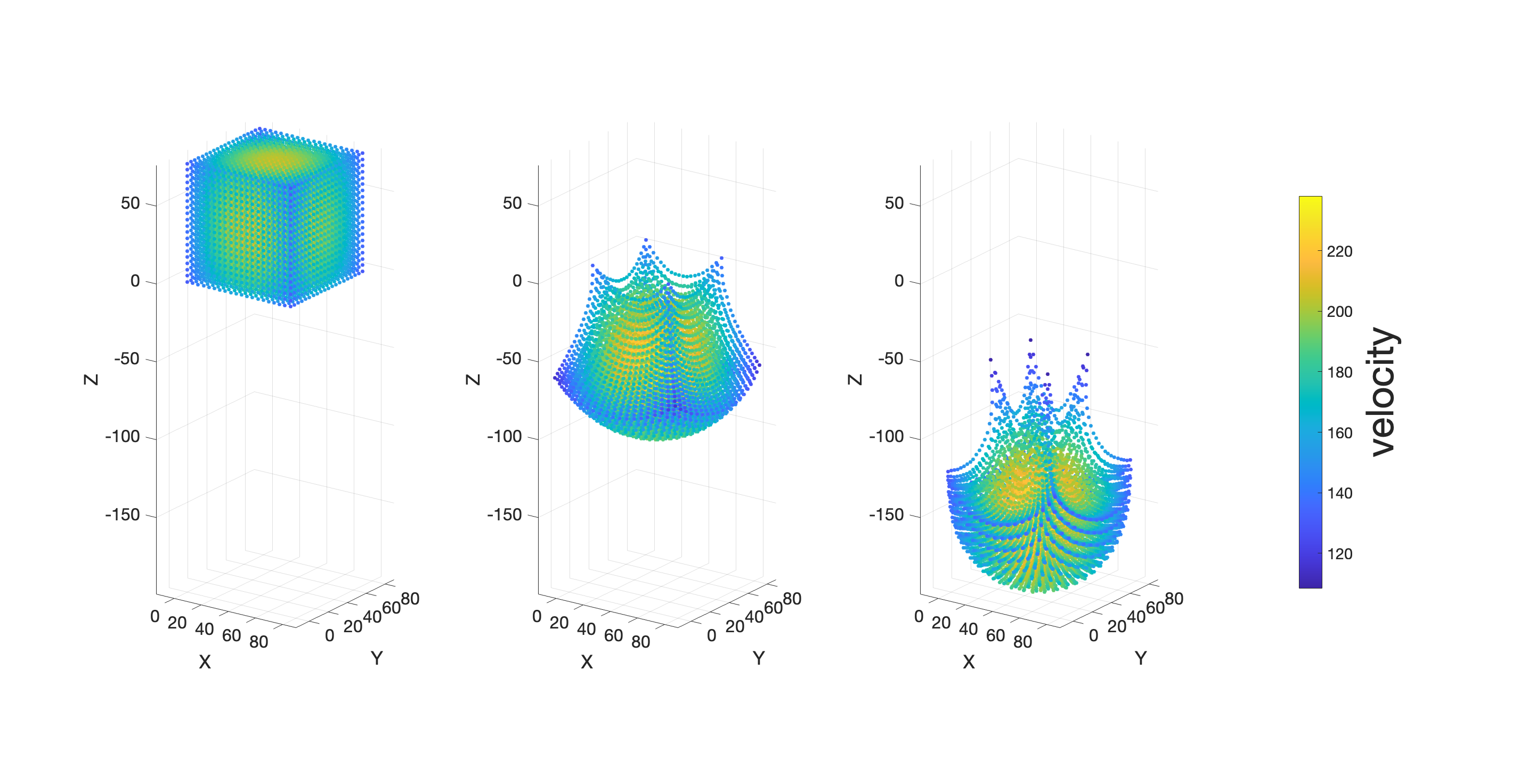} 
    \caption{Snapshots of 8000 particles sedimenting in an unbounded domain at times $t = 0.005$, $0.25$, and $1$, respectively, simulated using the HIGNN \cite{ma2025mathcal}. The color is correlated to the particles' velocity magnitude.}
    \label{snap} 
\end{figure}

\begin{table}[h!]\label{lattice}
\renewcommand{\arraystretch}{1.1}
\centering
\begin{center}
\caption{Iteration counts and errors of the Arnoldi process for a particulate suspension problem with varying particle number $N$ at fixed concentration.} 
\label{Mhalf}
\begin{tabular}{c|c|c}
\hline$N$ 
&Iteration Counts $k$ & Relative Error \\
\hline 
$20^3$  &10 &2.06699e-04 \\
$40^3$  &14 &1.34295e-04 \\
$60^3$  &15 &1.70283e-04 \\
$80^3$  &18 &3.11680e-04 \\
$100^3$ &21 &3.86345e-04 \\
\hline
\end{tabular}
\end{center}
\end{table}

The snapshots at the 1st, 100th, and 200th time steps from the simulation of a $20^3$ particle system are presented in \cref{snap}. Letting $M$ be the mobility matrix at the 100th time step, \cref{lattice} records the number of iterations $k$ required to compute $M^{1/2}\bm{b}$ and the relative error as the number of particles $N$ increases from $20^3$ to $100^3$. Note that in this scenario, since the initial spacing between particles is kept the same as $N$ increases, a fixed particle concentration is maintained. 
One sees that as the number of particles increases, the number of iterations required to compute the matrix square root applied to a vector remains relatively low and does not grow significantly, while maintaining a relative error on the order of $10^{-4}$. This is because the matrix is well conditioned. Furthermore, we depict the relationship between the number of particles and the runtime on a log-log scale in \cref{log_log}. The plot implies that the Arnoldi approximation, based on the hierarchical matrix representation, exhibits an  $O(N \log N)$  time complexity.

\begin{figure}[h]
    \centering    \includegraphics[width=0.8\textwidth]{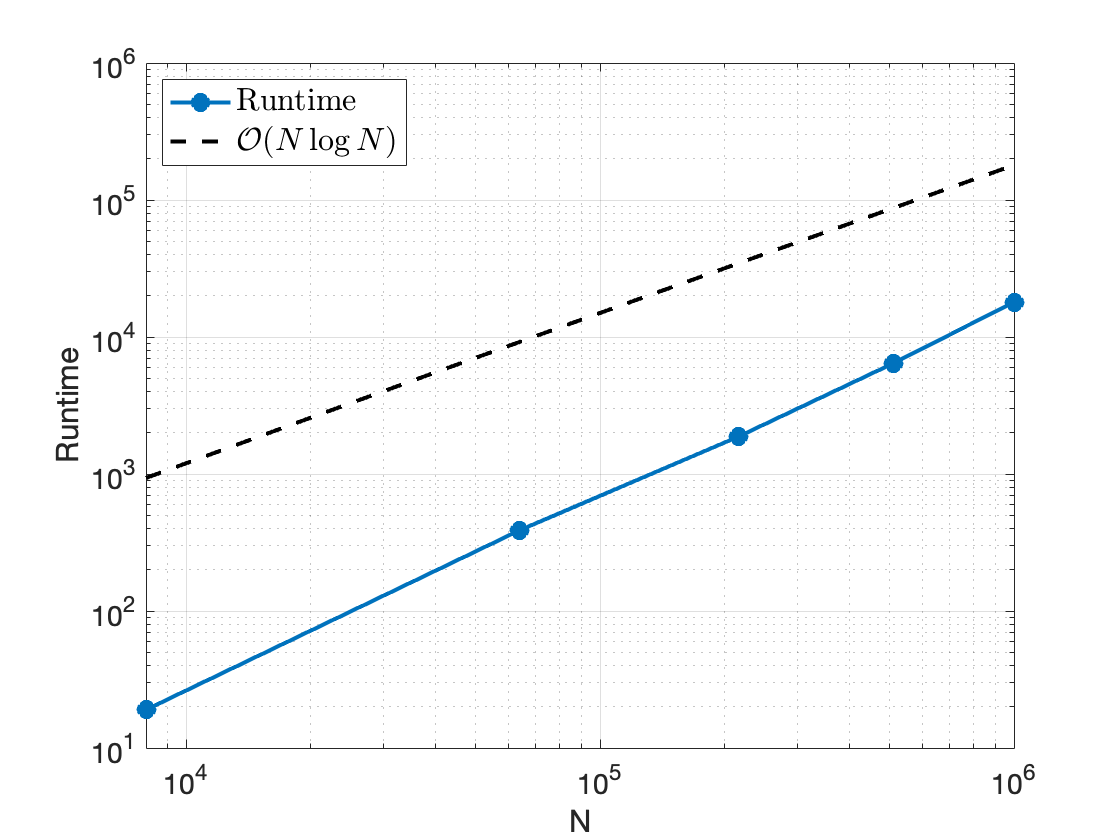} 
    \caption{Log-log plot of runtime versus number of particles $N$ at fixed particle concentration.}    
    \label{log_log} 
\end{figure}

Next, we consider $40^3$ particles arranged in a similar initial cubic lattice as above, but with the grid spacing $d$ varying from 3 to 5 (still perturbed by a displacement of 0.2). With this change, we can examine varying particle concentrations; as $d$ increases, the particle concentration decreases, thus lowering the required rank of the approximation to the corresponding blocks. 
As shown in \cref{sphere}, lower particle concentration leads to reduced runtime for the Arnoldi approximation. Also, the required number of iterations remains relatively small and stable compared to the matrix size. In this test, $M$ corresponds to the mobility matrix attained at the 50th time step.




\begin{table}[h]\label{sphere}
\renewcommand{\arraystretch}{1.1}
\centering
\begin{center}
\caption{Iteration counts, errors, and runtime of the Arnoldi process for a particulate suspension problem with varying particle concentration at $N=40^3$.}
\label{Mhalf}
\begin{tabular}{c|c|c|c}
\hline$d$ 
& Iteration Counts $k$ & relative error & runtime \\
\hline 
$3$  &12 &1.93793e-04 &362.4649 \\
$3.5$ &12 &1.81240e-04 &386.0504\\
$4$  &10 &1.88017e-04 &319.9872 \\
$4.5$  &10 &1.86118e-04 &307.9138 \\
$5$  &10 &1.83476e-04 &298.9904 \\

\hline
\end{tabular}
\end{center}
\end{table}

\section{Concluding remarks}\label{sum}
In this work, we derive error estimates of the Arnoldi process for approximating the matrix square root applied to a vector, extending the results reported in \cite{chen2022error} for Hermitian matrices to non-Hermitian cases.  In addition, we also provide a sharper bound for the Hermitian case, improving the state-of-the-art result. Moreover, we provide an error bound based on a perturbed matrix. Through a series of numerical experiments, we systematically assess the convergence behaviors of the Arnoldi iteration for computing $M^{1/2} \bm{b}$ across different matrix types. The findings indicate that the theoretical results presented in  \cref{error_square} appear to be sharp in terms of iteration count, yet remain a loose upper bound with respect to the largest singular value for certain classes of matrices. Furthermore, we consider the mobility matrix arising from particulate suspension simulations, where the matrix is approximated into a hierarchical form. 
The numerical results demonstrate the effectiveness of our approach for large-scale problems.
Future extensions of this work include deriving a sharper dependence on the largest singular value for approximating the matrix square root applied to a vector via Arnoldi iteration, as well as investigating the error behaviors of GMRES in the approximation of $f(M)\bm{b}$ for general functions $f$.

\section*{Acknowledgments}
The authors would like to thank Zhan Ma and Zisheng Ye for their valuable discussions and assistance with the code implementation. The authors also gratefully acknowledge funding support from the National Science Foundation under Grant No. DMS-2208267 and the Army Research Office under Grant No. W911NF2310256.
\bibliographystyle{siamplain}
\bibliography{references}
\end{document}